\providecommand{\tabularnewline}{\\}
\theoremstyle{definition}
\newtheorem*{defn*}{\protect\definitionname}
\theoremstyle{plain}
\newtheorem{thm}{\protect\theoremname}
\theoremstyle{plain}
\newtheorem{prop}[thm]{\protect\propositionname}
\theoremstyle{plain}
\newtheorem*{cor*}{\protect\corollaryname}
\theoremstyle{definition}
\newtheorem*{xca*}{\protect\exercisename}
\theoremstyle{remark}
\newtheorem*{rem*}{\protect\remarkname}
\providecommand{\corollaryname}{Corollary}
\providecommand{\definitionname}{Definition}
\providecommand{\exercisename}{Exercise}
\providecommand{\propositionname}{Proposition}
\providecommand{\remarkname}{Remark}
\providecommand{\theoremname}{Theorem}
\begin{document}
\title{Inverse spherical Bessel functions generalize Lambert $W$ and solve
similar equations containing trigonometric or hyperbolic subexpressions
or their inverses}
\author{David R. Stoutemyer}
\maketitle
\begin{abstract}
A strict integer Laurent polynomial in a variable $x$ is 0 or a sum
of one or more terms having integer coefficients times $x$ raised
to a negative integer exponent. Equations that can be transformed
to certain such polynomials times $\exp(-x)=\mathit{constant}$ are
exactly solvable by inverses of modified spherical Bessel functions
of the second kind $k_{n}(x)$ where $n$ is the order, generalizing
the Lambert $W$ function when $n>0.$ Equations that can be converted
to certain such polynomials times $\cos(x)$ or such polynomials times
$\sin(x)$ or a sum thereof $=\mathit{constant}$ are exactly solvable
by inverses of spherical Bessel functions $y_{n}(x)$ or $j_{n}(x)$.
Such equations include $\cos(x)/x=\mathit{constant}$, for which the
solution $\mathrm{inverse}\,_{1}(y_{0})(-constant)$ is the Dottie
number when $\mathit{constant}=1$, where subscript $1$ is the branch
number. Equations that can be converted to certain strict integer
Laurent polynomials times $\sinh(x)$ and possibly also plus such
a polynomial times $\cosh(x)$ are exactly solvable by inverses of
modified spherical Bessel functions of the first kind $i_{n}(x)$.

These discoveries arose from the AskConstants program surprisingly
proposing the explicit exact closed form solution $\mathrm{inverse}\,_{1}(y_{0})(-1)$
for the approximate input 0.739085133215160642, because no explicit
exact closed form representation was known for this Dottie number
from approximately 1865 to 2022. This article includes descriptions
of how to implement these spherical Bessel functions and their multi-branched
real inverses.
\end{abstract}

\section{Introduction}

Professor Dottie noticed that whenever she typed any real number $x_{0}$
into her calculator in radian mode then repeatedly pressed the cosine
button, the result always converged to the value
\[
\cos\cos\ldots\cos x_{0}\rightarrow0.73908513321516\,.
\]
This rounded value of the Dottie number (OEIS A003957, \cite{OEIS})
is the only real fixed point of 
\begin{equation}
x=\cos x.\label{eq:DottiesFixedPointEqn}
\end{equation}
This transcendental number was known at least as long ago as 1865
\cite{Bertrand}. Hansha \cite{OzanerHansha} nicely summarizes some
facts about it.

Figure \ref{Figure:IntersectionOfxWithCosx} plots the intersecting
two sides of Dottie's fixed-point equation (\ref{eq:DottiesFixedPointEqn}). 

\begin{figure}[H]
\caption{The one and only real intersection of $x$ with $\cos x$}
\label{Figure:IntersectionOfxWithCosx}
\noindent \centering{}\includegraphics{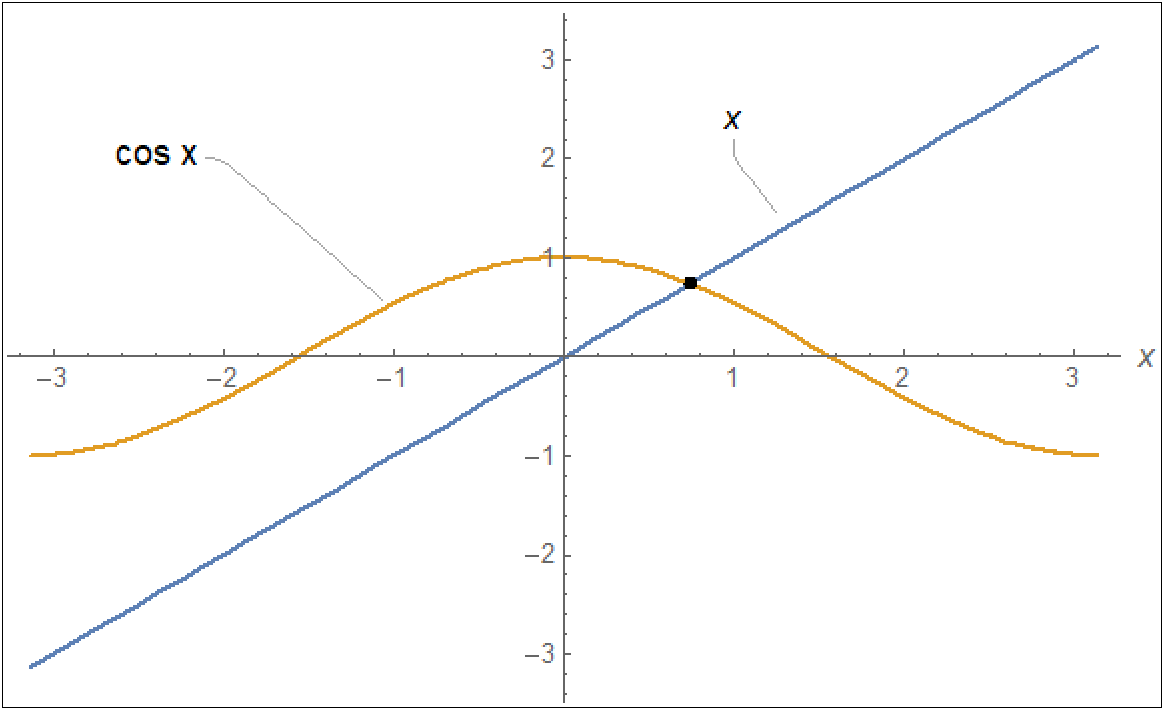}
\end{figure}

Mathematica can express this number as $\mathsf{Root}\,[\{\#1\,-\,\mathsf{Cos}\,[\#1]\,\&,0.73908513321516\}]\}$.
This representation can be considered exact because Mathematica can
do some exact computations with it, such as
\[
\begin{array}{c}
\mathrm{\mathsf{With}}\,[\left\{ x=\mathrm{\mathsf{Root}}\,\left[\left\{ \#1-\mathrm{\mathsf{Cos}}[\#1]\,\&,\,0.73908513321516\right\} \right]\right\} ,\\
\mathrm{\mathsf{FullSimplify}}\,\left[x-\mathrm{\mathsf{Cos}}\,[x]\right],
\end{array}
\]
which returns exact \textsl{integer} 0 rather than merely a floating-point
residual that has small magnitude compared to $0.73908513321516\,.$
For brevity, \textbf{floating-point} \textbf{numbers} are hereinafter
called \textbf{floats}.

Moreover, $\mathrm{\boldsymbol{\mathrm{\mathsf{N}}}}\left[\mathrm{\mathsf{Root\,}}[{\#1-\mathrm{\mathsf{Cos}}\,[\#1]\,\,\&,\,0.73908513321516}],\:\boldsymbol{precision}\right]$
returns a float having the requested value of $\mathit{precision}$
despite the float in the Root expression having only 14 significant
digits. For example on my 4 gigahertz computer,
\[
\mathrm{\boldsymbol{\mathrm{\mathsf{N}}}}\left[\mathrm{\mathsf{Root\,}}[{\#1-\mathrm{\mathsf{Cos}}\,[\#1]\,\,\&,\,0.73908513321516}],\:\boldsymbol{1000}\right]
\]
returns 1000 digits in 0.002 seconds. In contrast, $\mathrm{\boldsymbol{\mathrm{\mathsf{N}}}}\left[0.73908513321516,1000\right]$
returns 0.73908513321516 unchanged because there is insufficient information
in 0.73908513321516 alone to extend it. The finite-precision float
in the $\mathsf{Root}$ expression is merely to indicate the closest
root with sufficient precision to guarantee that the same root can
be extended to any finite precision up to the limits of your patience
and computer memory.

Although such Root expressions can be considered exact, they are implicit
rather than explicit representations, making them aesthetically less
satisfying than solutions of the form $x=\mathit{constant}$ where
$\mathit{constant}$ is composed of a finite number of rational numbers,
symbolic constants such as $\pi$, operators, and functions having
known names, including subexpressions of the form $\mathrm{inverse}\,(f)(\mathit{constant})$,
where $f$ is a known name, such as erf.\footnote{Just as nouns enjoy the economy of concisely representing descriptive
phrases, explicitly named inverses such as $\arctan$ or Mathematica's
InverseErf are mentally more economical than subexpressions containing
compositions such as $\mathrm{inverse}\,(f)$, which are more economical
than natural language descriptions such as ``the least positive solution
to the equation \ldots ''.
\noindent \begin{flushright}
``\textsl{What's in a name}?''\\
\textendash{} Shakespeare
\par\end{flushright}} Functional forms such as $\intop_{\ldots}^{\cdots}\ldots dx$ and
$\sum_{\ldots}^{\cdots}\ldots$ do not occur, having been replaced
by explicit closed-form equivalents.

As a test, I copied 18 digits of the approximate value of the Dottie
number into Version 5.0 of my AskConstants application \cite{StoutemyerAskConsants}
and \cite{StoutemyerHowToHuntWildConstants}. I merely hoped that
it did not propose an impostor exact closed form and misleadingly
assess it a high likelihood of being a correct limit as the precision
approaches $\infty$. However, I was surprised and delighted to discover
that the application returned a simple candidate that is easily proved
to be a previously unknown explicit exact closed form solution. Figure
\ref{Figure:AskConstantsDottie} shows that AskConstants proposes
\[
\mathrm{\mathsf{RealInverseSphericalBesselY}}\,\,\left[0,-1,1\right]
\]
as the limit.\footnote{$\mathrm{\mathsf{RealInverseSphericalBesselY}}\,\,\left[n,t,b\right]$
is one of about 45 AskConstants functions that implement real inverses
of Mathematica functions that have no builtin inverse counterparts.
Here $n=0$ is the order of the inverted function $y_{n}$, $t=-1$
is the value of $y_{n}$, and $b=1$ is the branch number of the countably
infinite branch numbers of the inverse. The $\mathsf{Real}$ prefix
is because the implementation is designed only for real values of
$t$ where the inverse is also real, which is all that is necessary
for AskConstants (and for many other applications).} Such constant-``recognition'' programs generate conjectures rather
than proofs. Section \ref{sec:SolvableViayn} contains a proof, and
the ordinate of the large dot in the scatter plot encouragingly indicates
that this candidate agrees with the input to all 18 of the entered
significant digits.

\begin{figure}[H]
\noindent \begin{centering}
\caption{AskConstants proposes an explicit exact closed form for the Dottie
number}
\label{Figure:AskConstantsDottie}
\par\end{centering}
\includegraphics[scale=0.5]{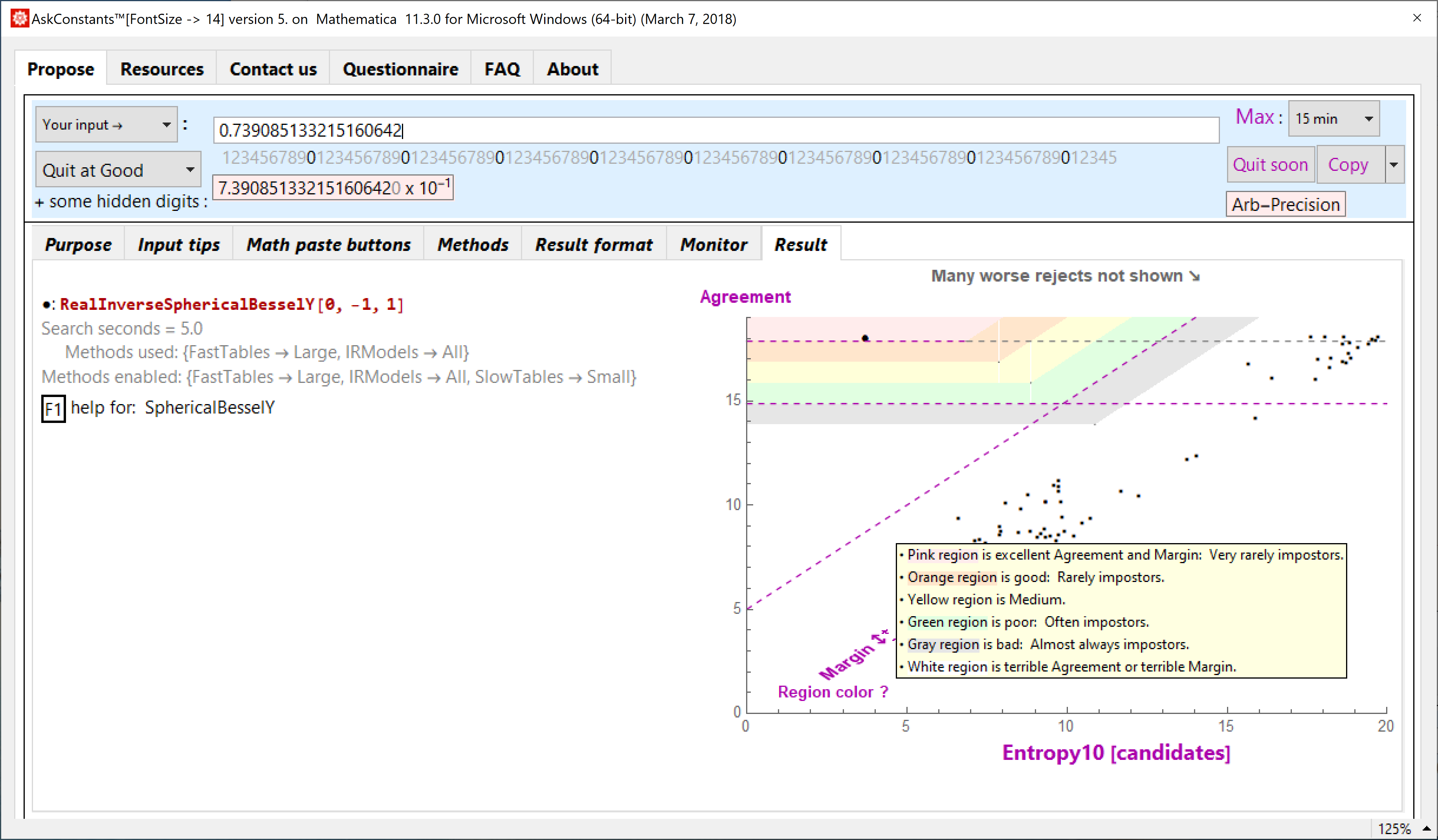}
\end{figure}

However, it is easy to match any number of float digits with a sufficiently
complicated non-float expression. For example, all floats are \textsl{exactly}
representable as rational numbers \textendash{} albeit often with
many digits in the numerator and/or denominator. Therefore AskConstants
also assesses Agreement discounted by a complexity measure that also
has digits as units: The scatter plot also shows that the abscissa
Entropy10 complexity measure of the proposed candidate is approximately
4.0. The Entropy10 of an exact expression is the sum of the base 10
logarithms of the absolute values of the nonzero integers in numerators
and denominators in the expression, plus an average of approximately
1.0 per operator, function or symbolic constant. The Likelihood of
a proposed exact candidate expression being the true limit increases
with
\[
\mathrm{Margin}:=\mathrm{Agreement}-\mathrm{Entropy10\,.}
\]
The Margin is thus approximately 18.0 - 4.0 = 14.0 digits, which together
with the Agreement loss of approximately 0.0 digits makes the candidate
very likely to be a true limit.\footnote{And entering all 108 digits from OEIS A003957, \cite{OEIS} gives
the same explicit closed-form result agreeing to 108 digits, which
is much stronger evidence.} 

The smaller scatter plot dots in Figure \ref{Figure:AskConstantsDottie}
are the best of many rejects, and mousing over them invokes tooltips
with their candidate formulas. For example, the leftmost small dot
above the upper dashed horizontal line is for the candidate
\[
\dfrac{2(55103+19462\mathrm{\mathsf{\,Khinchin}})}{290541}
\]
with $\mathsf{Khinchin}$ representing his constant $\approxeq2.68545$.
Its Agreement is also approximately all 18 digits, but with Entropy10
approximately 17.0, giving it an agreement Margin of only approximately
1.0. As indicated in the plot legend, a plot point must be in a color
band near the pink end of the spectrum in the upper left corner of
the plot to have a high likelihood of being the limit of the float
as its Precision approaches infinity.

Bill Gosper expressed similar surprise when he subsequently obtained
the same candidate exact value for the Dottie number. This article
is an explanation together with some additional discoveries made while
formulating the explanation. More specifically:

Section \ref{sec:SolvableViayn} shows how the inverses of spherical
Bessel functions $y_{n}(x)$ can solve some equations transformable
to the form
\[
\left(\sum_{\ell=1}^{n+1}\dfrac{c_{\ell}}{x^{\ell}}\right)\cos x+\left(\sum_{\ell=1}^{n}{\displaystyle \dfrac{s_{\ell}}{x^{\ell}}}\right)\sin x=c_{0}
\]
for finite $n\geq0$ where $c_{\ell}$ and $s_{\ell}$ are particular
integers and $c_{0}$ is any constant of any kind in the range of
the left side. Dottie's equation (\ref{eq:DottiesFixedPointEqn})
can be transformed to this form for $n=0$$\,$.

Section \ref{sec:SolvableViajn} shows how the inverses of spherical
Bessel functions $j_{n}(x)$ can similarly solve some equations transformable
to the form
\[
\left(\sum_{\ell=1}^{n+1}\dfrac{s_{\ell}}{x^{\ell}}\right)\sin x+\left(\sum_{\ell=1}^{n}{\displaystyle \dfrac{c_{\ell}}{x^{\ell}}}\right)\cos x=c_{0}
\]
where $x^{n+1}$ divides $\sin x$ rather than $\cos x.$

Section \ref{sec:Hyperbolic} shows how the inverses of modified spherical
Bessel function $i_{n}(x)$ can solve some analogous equations transformable
to the form
\[
\left(\sum_{\ell=1}^{n+1}\dfrac{s_{\ell}}{x^{\ell}}\right)\sinh x+\left(\sum_{\ell=1}^{n}\dfrac{c_{\ell}}{x^{\ell}}\right)\cosh x=c_{0}.
\]

Section \ref{sec:LambertW} shows how the inverses of modified spherical
Bessel function $k_{n}(x)$ can solve some analogous equations transformable
to the form
\[
\left(\sum_{\ell=1}^{n+1}\dfrac{c_{\ell}}{x^{\ell}}\right){\displaystyle e^{-x}}=c_{0},
\]
for which Lambert $W$ also solves the special case $n=0.$

\section{Inverses of $\boldsymbol{y_{n}(x)}$ solve some equations containing
$\boldsymbol{\cos x/x^{\ell}}$ and possibly also $\boldsymbol{\sin x/x^{\ell}}$\label{sec:SolvableViayn}}

As with most mathematics software, Mathematica implements principal-branch
inverses for all of its elementary functions, but implements inverses
for very few of its many special functions. Perhaps this is because
almost all mathematical functions in Mathematica work for nonreal
as well as real values of \textsl{all} their arguments, including
orders and other parameters that are usually real or integer; and
implementing inverse special functions for such multivariate complex
domains would be a daunting task.\footnote{Mathematica has a general purpose function named $\mathsf{InverseFunction}$,
but it offers no branch choice, and for versions through 12.1 it can
jump between branches or omit segments, as illustrated by executing,
for example
\[
\mathrm{\mathsf{Plot}}\,\left[\mathrm{\mathsf{InverseFunction}}\,[\mathrm{\mathsf{ExpIntegralEi]}}\,[y],\left\{ y,-4,1\right\} \right],
\]
which plots \textsl{nothing}.} However, AskConstants needs only real inverses that are much easier
to implement. Moreover, it is relatively easy to implement all or
at least many real branches for most special functions. Therefore
AskConstants includes such multi-branched real inverses for many of
the Mathematica special functions, including spherical Bessel functions
$y_{n}(x)$ and $j_{n}(x)$.
\begin{defn*}
A \textbf{strict univariate Laurent polynomial} is either 0 or a sum
of one or more terms that are a numeric coefficient times a \textsl{negative}
integer power of the variable.
\end{defn*}
This article is concerned with equations that can be transformed to
one of the forms
\begin{align*}
p(x)\,e^{-x} & =c_{0},\\
p(x)\cos x+q(x)\sin x & =c_{0},\\
p(x)\cosh x+q(x)\sinh x & =c_{0,}
\end{align*}
where $c_{0}$ is real and where $p(x)$ and $q(x)$ are strict Laurent
polynomials in variable $x$ with integer coefficients.

The spherical Bessel function of the second kind $y_{n}(x)$ is defined
as a specific solution to the ordinary differential equation
\[
x^{2}\dfrac{d^{2}y}{dx^{2}}+2x\dfrac{dy}{dx}+\left(x^{2}-n(n+1)\right)y=0.
\]

This function is less concisely representable as
\[
y_{n}(x)=\sqrt{\dfrac{\pi}{2x}}J_{n+\frac{1}{2}}(x),
\]
where $J_{n+\frac{1}{2}}(x)$ is an ordinary (cylindrical) Bessel
function of the second kind.

For integer orders $n$, the Mathematica $\mathsf{FunctionExpand}$
function transforms $y_{n}(x)$ into an exact closed form Laurent
cos sin representation partially listed in Table \ref{Table:SphericalBesselyn(z)ToTrigLarent-1}.
These table entries can be computed from a formula of Lord Rayleigh's
for $x\neq0$, known for over 100 years:
\begin{equation}
y_{n}(x):=-(-x)^{n}\left(\dfrac{1}{x}\dfrac{d}{dx}\right)^{n}\dfrac{\cos x}{x}\label{eq:RaleighForyn}
\end{equation}
or from $y_{0}(x)$, $y_{1}(x),$ and the recurrence
\begin{equation}
y_{n+1}(x):=\dfrac{2n+1}{x}y_{n}(x)-y_{n-1}(x).\label{eq:RecurrenceForynOrjn}
\end{equation}

\begin{table}[H]
\caption{Exact strict Laurent cos sin expansions of spherical Bessel functions
$y_{n}(x)$ }
\label{Table:SphericalBesselyn(z)ToTrigLarent-1}
\noindent \centering{}%
\begin{tabular}{|c|c|c|}
\hline 
$\!n\!$ & Partially expanded $y_{n}(x)=c_{0}$  & Solutions $\forall$ branches $b$ containing $c_{0}$\tabularnewline
\hline 
\hline 
0 & $-\dfrac{\cos x}{x}=c_{0}$ & $x=\mathrm{inverse}\,_{b}(y_{\boldsymbol{0}})(c_{0})$\rule[-14pt]{0pt}{34pt}\tabularnewline
\hline 
$1$ & $-\dfrac{\cos x}{x^{2}}-\dfrac{\sin x}{x}=c_{0}$ & $x=\mathrm{inverse}\,_{b}(y_{\boldsymbol{1}})(c_{0})$\rule[-14pt]{0pt}{34pt}\tabularnewline
\hline 
$\!2\!$ & $\left(-\dfrac{3}{x^{3}}+\dfrac{1}{x}\right)\cos z-\dfrac{3}{x^{2}}\sin x=c_{0}$ & $x=\mathrm{inverse}\,_{b}(y_{\boldsymbol{2}})(c_{0})$\rule[-14pt]{0pt}{34pt}\tabularnewline
\hline 
$\!3\!$ & $\!\left(\!-\dfrac{15}{x^{4}}+\dfrac{6}{x^{2}}\!\right)\cos x+\left(\!-\dfrac{15}{x^{3}}+\dfrac{1}{x}\!\right)\sin x=c_{0}\!$ & $x=\mathrm{inverse}\,_{b}(y_{\boldsymbol{3}})(c_{0})$\rule[-14pt]{0pt}{34pt}\tabularnewline
\hline 
$\vdots$ & $\vdots$ & $\vdots$\tabularnewline
\hline 
\end{tabular}
\end{table}

The strict Laurent polynomials in Table 1 are Bessel polynomials in
$1/x,$ which are a special case of Lommel polynomials \cite{DickinsonLommelPoly}.
Thus, for example, equations such as
\[
-\dfrac{\cos x}{x^{2}}-\dfrac{\sin x}{x}=c_{0}
\]
for constant $c_{0}$ in the range of the left side are solvable by
$\mathrm{inverse}\,_{b}(y_{1})(c_{0})$ for the branches $b$ that
contain $c_{0}.$ Published equations that can be transformed to this
form most often have \textsl{nonnegative} powers of $x$, making it
necessary to divide all the terms on both sides by the largest power
of $x$. Published equations for $n>0$ often have $\tan x$ or $\cot x$
rather than $\cos x$ and $\sin x$, requiring multiplication by $\cos x$
or $\sin x.$

Figure \ref{Figure:y0Thruy3AndOrdinateMinus1} superimposes plots
of $y_{n}(x)$ for order $n=0$ through $3$, with each order a different
color. As suggested by Table \ref{Table:SphericalBesselyn(z)ToTrigLarent-1}
and Figure \ref{Figure:y0Thruy3AndOrdinateMinus1}:
\begin{itemize}
\item $y_{n}(x)$ has odd symmetry for even $n$ and even symmetry for odd
$n$;
\item for even $n$, $y_{n}(x)=c_{0}$ always has at least one real solution
for all real $c_{0}$;
\item for odd $n$, $y_{n}(x)=c_{0}$ always has at least two real solutions
for all real $c_{0}$$\:\leq y_{n}(x_{n,1}),$ where $x_{n,1}$ is
the least positive abscissa for a local maximum of $y_{n}$;
\item a pole of order $n+1$ dominates as $x$$\,\shortrightarrow0$; 
\item $y_{n}(x)\shortrightarrow(\cos x)/(-x)^{n+1}$ for even $n$ or $y_{n}(x)\shortrightarrow(\sin x)/(-x)^{n}$
for odd $n$ as $x\shortrightarrow\pm\infty.$
\end{itemize}
\begin{figure}[H]
\caption{Spherical Bessel $y_{0}(x)$ through $y_{3}(x)$ and ordinate -1}

\label{Figure:y0Thruy3AndOrdinateMinus1}
\noindent \centering{}\includegraphics{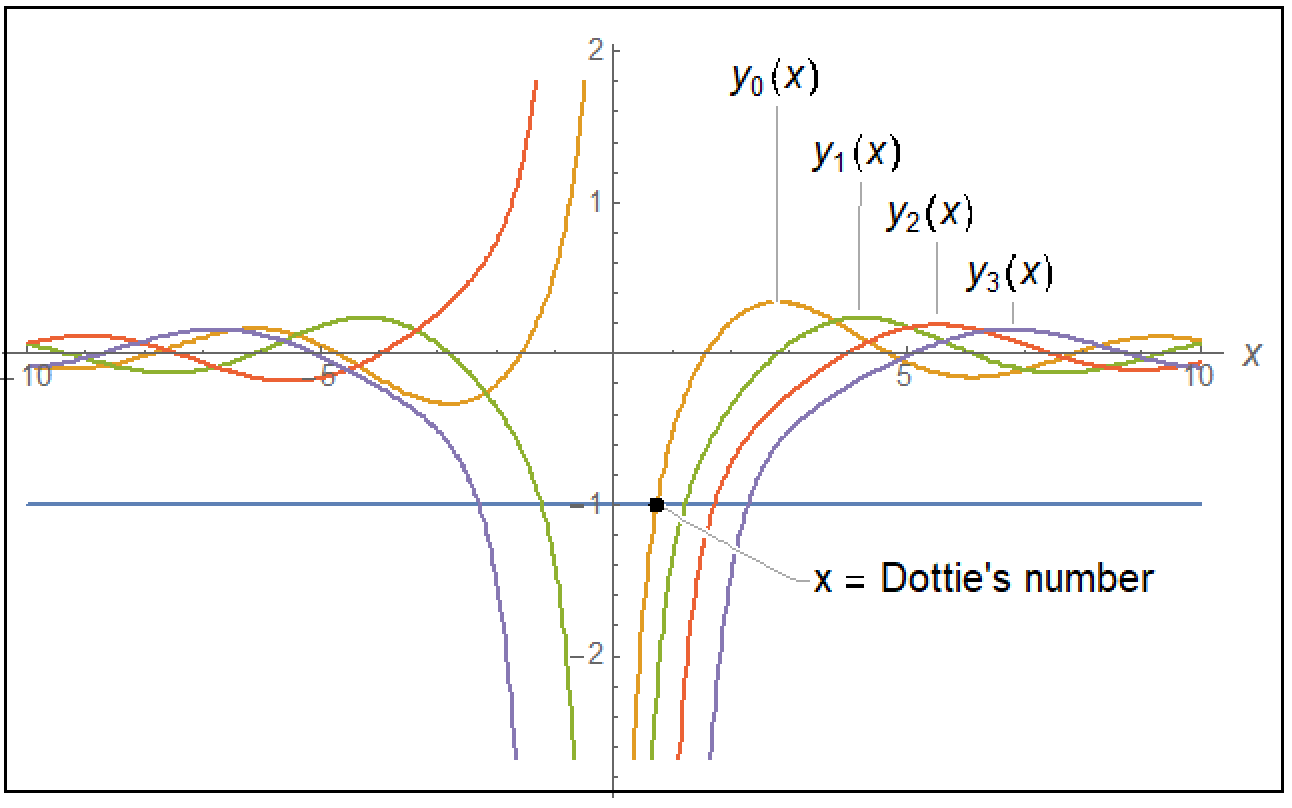}
\end{figure}
All of the real inverse functions for special functions $f(x)$ implemented
in AskConstants work by partitioning the portion of real $x$ where
$f(x)$ is real into maximally monotonic intervals terminated by the
refinable float local infima and suprema of $f$. Table \ref{Table:y0(x)InfimaAndSuprema}
lists six digits of the abscissas and ordinates of some of those interval
endpoints for $y_{0}(x)$, which AskConstants can compute for a particular
infimum or supremum number of $y_{n}(x)$ to precision $p$ by entering,
for example, 
\[
\mathrm{\mathsf{N}}\,[\mathrm{\mathsf{SphericalBesselYInfimumOrSupremumAbscissa}}\,[n,\mathit{infsupumNumber}],\,p]
\]
and
\[
\mathrm{\mathsf{N}}\,[\mathrm{\mathsf{SphericalBesselYInfimumOrSupremumOrdinate}}\,[n,\mathit{infsupumNumber}],\,p].
\]
The infimum or supremum having the least positive abscissa is number
1, with any to its right successively numbered 2, 3, \ldots{} and any
to its left successively numbered 0, -1, $\cdots$.

\begin{table}[H]
\begin{centering}
\caption{Some of the countably infinite local infima and suprema of $y_{0}(x)$}
\par\end{centering}
\begin{centering}
\label{Table:y0(x)InfimaAndSuprema}
\par\end{centering}
\noindent \centering{}%
\begin{tabular}{|c|c|l|}
\hline 
\begin{tabular}{c}
infsupum\tabularnewline
number $m$\tabularnewline
\end{tabular} & %
\begin{tabular}{c}
abscissa\tabularnewline
$x_{0,m}$\tabularnewline
\end{tabular} & %
\begin{tabular}{c}
ordinate\tabularnewline
$y_{0}\left(x_{0,m}\right)$\tabularnewline
\end{tabular}\tabularnewline
\hline 
\hline 
$\vdots$ & $\vdots$ & $\vdots$\tabularnewline
\hline 
-2 & -6.12125 & 0.161228\tabularnewline
\hline 
-1 & -2.79839 & -0.336508\tabularnewline
\hline 
0 & 0.00000 & $\begin{cases}
-\infty, & \mathrm{\mathsf{Direction}}\rightarrow\text{\textquotedblleft}\mathrm{\mathsf{FromAbove}}\text{\textquotedblright};\\
\infty, & \mathrm{\mathsf{Direction}}\rightarrow\text{\textquotedblleft}\mathrm{\mathsf{FromBelow}}\text{\textquotedblright}\\
\mathrm{\mathsf{ComplexInfinity}}, & \mathrm{otherwise}
\end{cases}$\tabularnewline
\hline 
1 & 2.79839 & 0.336508\tabularnewline
\hline 
2 & 6.12125 & -0.161228\tabularnewline
\hline 
$\vdots$ & $\vdots$ & $\vdots$\tabularnewline
\hline 
\end{tabular}
\end{table}
$\mathsf{SphericalBesselYInfimumOrSupremumOrdinate}$ has an optional
argument that appropriately returns either $\infty$ or $-\infty$
instead of $\mathsf{ComplexInfinity}$ when entered as $\mathsf{Direction}$
$\rightarrow$ ``$\mathsf{FromAbove}$'' or $\mathsf{Direction}$
$\rightarrow$ ``$\mathsf{FromBelow}$''.

Float values of nontrivial stationary infsupum abscissas of $f(x)$
are computed by using the iterative Mathematica function invocation
\[
\mathsf{FindRoot}\,[f'(x)=0,\mathit{guess},\mathit{lowerBound},\mathit{upperBound},\mathsf{WorkingPrecision}\rightarrow\ldots],
\]
using a guess that is the value correct to 16 significant digits.

Each maximally monotonic interval of $f(x)$ corresponds to a real
branch of the inverse function. Branch 1 is the branch whose right
endpoint is least positive. Branches for successively more positive
right endpoints are numbered 2, 3, etc., whereas branches for successively
smaller right endpoints are numbered 0, -1, etc. The left endpoint
belongs to the branch. The right endpoint belongs to the branch only
for the rightmost branch.

Float values of $\mathrm{inverse}\,_{b}(f)(c_{0})$ are calculated
by
\[
\mathsf{FindRoot}\,[f(x)=c_{0},\mathit{guess},\mathit{lowerBound},\mathit{upperBound},\mathsf{WorkingPrecision}\rightarrow\ldots]
\]
with bounds that are the bounding pair of adjacent infimum or supremum
abscissas. The guess is usually piecewise, using invertible truncated
series approximations to $f(x)$ at the end abscissas and for some
functions also at a point between. The guess is usually within 20\%
of the converged value, typically achieving convergence within about
5 to 20 iterations.\footnote{Rather than using $\mathsf{FindRoot}$, accelerated truncated reverted
series and other methods more specific to each implemented function
could be faster with more control over the resulting accuracy. However,
I wanted to implement quickly about 45 inverse functions, many of
which have parameters such as an order and/or several or a countably
infinite number of branches.}

Dividing both sides of Dottie's fixed-point equation (\ref{eq:DottiesFixedPointEqn})
by nonzero $-x$ then referring to Table \ref{Table:SphericalBesselyn(z)ToTrigLarent-1}
gives
\[
-1=-\dfrac{\cos x}{x}=y_{0}(x).
\]
Therefore Figure \ref{Figure:y0Thruy3AndOrdinateMinus1} also plots
a horizontal line through ordinate $-1$, which intersects $y_{0}(x)$
at and only at abscissa Dottie number $x\approxeq$ 0.73908513321516$\,.$
This number is between and only between infsupum abscissas $x_{0}=0$
and $x_{1}\approxeq2.79839$ in Table \ref{Table:y0(x)InfimaAndSuprema},
which delimit branch 1. Therefore the Dottie number is 
\[
\mathrm{inverse}\,_{\boldsymbol{1}}(y_{0})(-1).
\]

Figure \ref{Figure:RealInverseSphericalBesselY} superimposes plots
of a vertical line through abscissa $-1$ and the order 0 AskConstants
function $\mathrm{\mathsf{RealInverseSphericalBesselY\,}}[0,t,b]$
for branches $b=-2$ through branch 3, with each branch a different
color. Here branch 1 and only branch 1 of the inverse function intersects
the vertical line at the Dottie number $\approxeq$ 0.73908513321516$\,.$
\noindent \begin{center}
\begin{figure}[H]
\noindent \begin{centering}
\caption{$\,\mathrm{inverse}\,_{\boldsymbol{1}}(y_{0})(-1)=\mathrm{Dottie\:\:number}$ }
\label{Figure:RealInverseSphericalBesselY}
\par\end{centering}
\noindent \centering{}\includegraphics[scale=1.2]{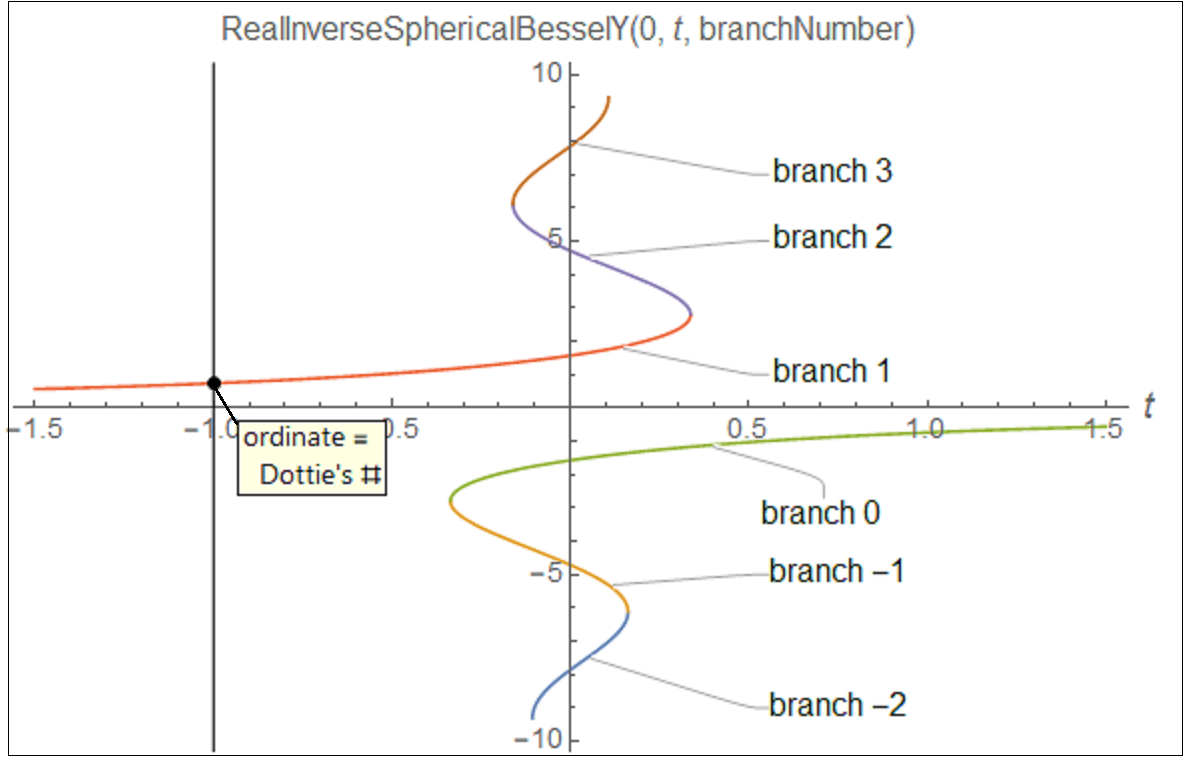}
\end{figure}
\par\end{center}

Figure \ref{Figure:LinesTangentToCosx} shows the four lines through
the origin tangent to $\cos x$ closest to the origin. The tangent
points are at abscissas and ordinates in Table \ref{Table:y0(x)InfimaAndSuprema}
except for the pole at infsupum $m=0\,.$ These tangent lines and
similar ones for larger $\left|m\right|$ suggest how the number of
real solutions to the \textbf{generalized Dottie equation}
\begin{equation}
\cos x=c_{0}\,x\label{eq:GeneralizedDottieEqn}
\end{equation}
changes as the value of $c_{0}$ decreases from $+\infty$: The one
intersection abscissa increases from $x=0^{+}$ through the Dottie
number until a negative solution also appears at the negative tangency
abscissa nearest the origin. Then that negative solution abscissa
bifurcates into two that increase in separation. Then another positive
solution appears at the positive tangency abscissa nearest the origin.
Then that solution bifurcates into two that increase in separation.
This process alternates at tangency points increasingly far from the
origin until $c_{0}=0$, making the solutions all be the zeros of
$\cos x$. Then pairs of points coalesce then disappear alternately
from the solutions furthest from the origin in the positive and negative
directions until there is just one solution at $x=0^{-}$.
\begin{figure}[H]

\caption{Some lines through the origin tangent to $\cos x$ at extrema of $\boldsymbol{y_{0}(x\protect\neq0)}$}

\noindent \label{Figure:LinesTangentToCosx}
\noindent \centering{}\includegraphics{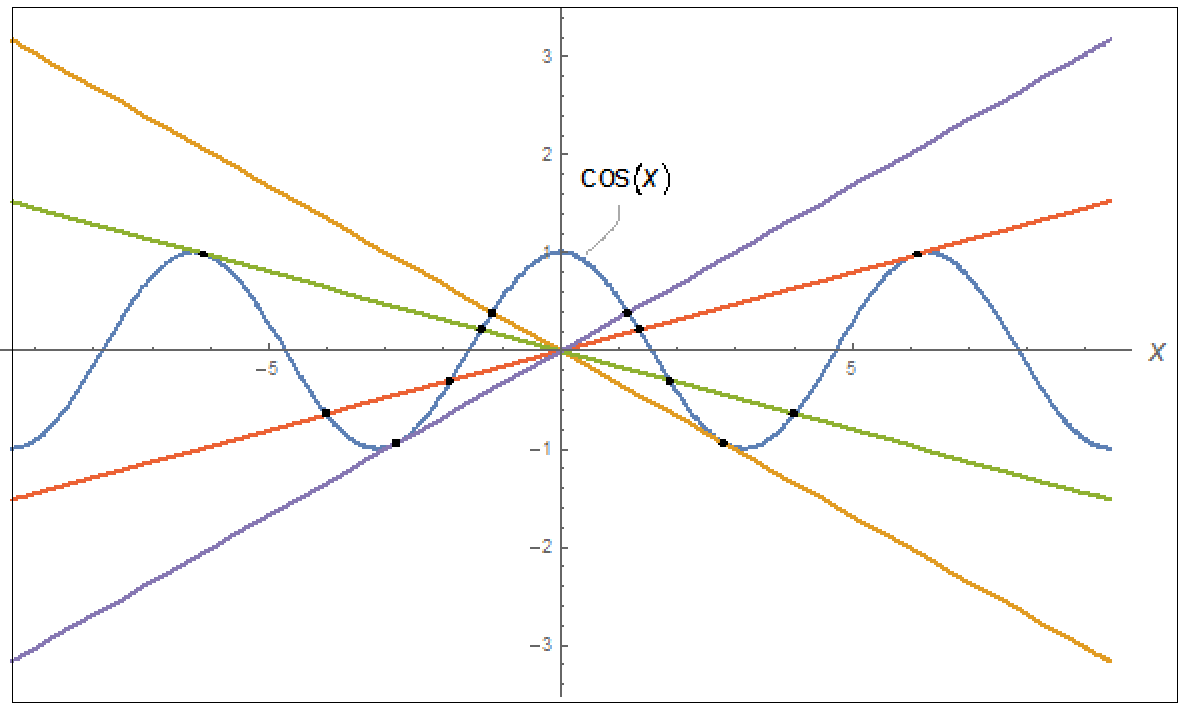}
\end{figure}

Constant recognition programs generate \textsl{conjectures}. It is
the responsibility of the user to prove or disprove them. For the
AskConstants conjecture that the Dottie number is exactly $\mathbf{inverse}\,_{1}(y_{0})(-1),$
it is easy to prove a much more general result:
\begin{prop}
\label{thm:yn}For real $x$ and an equation of the form
\begin{equation}
p(x)\cos x+q(x)\sin x=c_{0}\label{eq:ynOfxeqc0}
\end{equation}
from row $n$ of the countably infinite number of rows partially listed
in Table 2 that can be extended by recurrence $\mathrm{(}$\ref{eq:RecurrenceForynOrjn}$\mathrm{)}$,
let $x_{n,m}$ denote the $m^{\mathrm{th}}$ infsupum abscissa of
$the$ left side and let $B$ denote the set of all real branches
$b$ of $\mathrm{inverse}\,_{b}(y_{n})$ for which the interval
\[
[y_{n}(x_{n,\,b-1}),y_{n}(x_{n,b}))
\]
contains constant $c_{0}$.

Then all of the real solutions to equation $(\ref{eq:ynOfxeqc0})$,
if any, are
\[
x=\mathrm{inverse}\,_{b}(y_{n})(c_{0})
\]
where branches $b$ are all elements of set $B$.
\end{prop}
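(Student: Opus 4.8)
The plan is to reduce the proposition to a single structural fact: the left side of (\ref{eq:ynOfxeqc0}) is literally $y_n(x)$, after which the solution set can be read directly off the branch decomposition of $y_n$ described above. First I would invoke Rayleigh's formula (\ref{eq:RaleighForyn}) together with the recurrence (\ref{eq:RecurrenceForynOrjn}) to confirm that, for every finite $n \geq 0$, the Laurent polynomial combination $p(x)\cos x + q(x)\sin x$ in row $n$ of Table \ref{Table:SphericalBesselyn(z)ToTrigLarent-1} equals $y_n(x)$ for all real $x \neq 0$. Consequently (\ref{eq:ynOfxeqc0}) is equivalent to $y_n(x) = c_0$, and the real solution set we must characterize is exactly $\{\, x : y_n(x) = c_0 \,\}$.

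Next I would use the partition of the real line into maximally monotonic intervals. By construction the infsupum abscissas $x_{n,m}$ are the local infima and suprema of $y_n$, together with the pole at $x = 0$ where $|y_n| \shortrightarrow \infty$; hence between two consecutive such abscissas $y_n$ has no interior extremum and, being smooth there, is strictly monotonic on the interval delimited by $x_{n,b-1}$ and $x_{n,b}$. A continuous strictly monotonic function is a bijection from that $x$-interval onto its image, and by definition $\mathrm{inverse}_b(y_n)$ is its inverse. I would then argue that this branch contributes a solution exactly when $c_0$ lies in the image, that the solution is then unique on the branch, and that it equals $\mathrm{inverse}_b(y_n)(c_0)$; collecting over the branches, the full real solution set is $\{\, \mathrm{inverse}_b(y_n)(c_0) : b \in B \,\}$.

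The step demanding the most care --- and the only genuine bookkeeping --- is showing that the images of the branches tile the range of $y_n$ with neither overlap nor gap, so that every real solution is produced once and only once. The delicate case is $c_0$ equal to an extremum ordinate $y_n(x_{n,m})$: this value is shared by the two branches meeting at $x_{n,m}$, and an inconsistent inclusion rule would either count the single solution $x_{n,m}$ twice or lose it entirely. The half-open convention for the image interval $[y_n(x_{n,b-1}), y_n(x_{n,b}))$ --- which includes the ordinate at the left-endpoint abscissa and excludes the ordinate at the right-endpoint abscissa, regardless of whether $y_n$ increases or decreases across the branch --- mirrors the endpoint rule stated above (the left endpoint belongs to the branch, the right endpoint belongs only to the rightmost branch) and thereby assigns each shared extremum value to exactly one branch. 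With this convention the membership condition $c_0 \in [y_n(x_{n,b-1}), y_n(x_{n,b}))$ defining $B$ holds for precisely the branches carrying a preimage of $c_0$, and those preimages constitute exactly the distinct real solutions of (\ref{eq:ynOfxeqc0}); the finite pole ordinates never coincide with the real constant $c_0$, so they create no ambiguity.
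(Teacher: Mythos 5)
Your proof is correct and takes essentially the same route as the paper's: identify the left side of (\ref{eq:ynOfxeqc0}) with $y_{n}(x)$ via Rayleigh's formula (\ref{eq:RaleighForyn}), then read the solution set off the decomposition of the real line into maximally monotonic intervals that defines $\mathrm{inverse}\,_{b}(y_{n})$. The only difference is one of rigor: you make explicit the strict-monotonicity argument, the half-open image intervals, and the endpoint convention resolving extremum ordinates, all of which the paper's terse three-bullet proof leaves implicit.
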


\begin{proof}
$\,$

$\text{\textopenbullet}$ The spherical Bessel functions $y_{n}(x)$
are exactly equivalent to the left sides of these equations by the
Raleigh formula (\ref{eq:RaleighForyn}) for $y_{n}(x)$.

$\text{\textopenbullet}$ $\mathrm{inverse}\,_{b}(y_{n})(c_{0})$
is defined as the one real solution to $y_{n}(x)=c_{0}$ in a maximally
monotonic continuous interval number $b$ of $x$ containing $c_{0}.$

$\text{\textopenbullet}$ The set $B$ of all such branches thus contains
all of the solutions.
\end{proof}
\begin{cor*}
An exact closed form for the Dottie number is $\mathrm{inverse}\,_{1}\left(y_{0}\right)\left(-1\right)$.
\end{cor*}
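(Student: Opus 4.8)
The plan is to reduce Dottie's fixed-point equation to the $n=0$ instance of Table \ref{Table:SphericalBesselyn(z)ToTrigLarent-1} and then invoke Proposition \ref{thm:yn} directly. Let $d$ denote the Dottie number, the unique real root of $x=\cos x$. Since $d\approxeq 0.739\neq 0$, I would divide both sides of $\cos x = x$ by the nonzero quantity $-x$ to obtain
\[
-\dfrac{\cos x}{x} = -1,
\]
whose left side is exactly $y_{0}(x)$ by the Rayleigh formula (\ref{eq:RaleighForyn}). Thus $d$ is a real solution of $y_{0}(x) = -1$, which is an equation of the form (\ref{eq:ynOfxeqc0}) with $n=0$, $p(x)=-1/x$, $q(x)=0$, and $c_{0} = -1$, so Proposition \ref{thm:yn} applies.

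With the reduction in hand, the next step is to identify the branch to which $d$ belongs. Proposition \ref{thm:yn} asserts that every real solution is $x=\mathrm{inverse}_{b}(y_{0})(-1)$ as $b$ ranges over the set $B$ of branches whose range interval contains $-1$. From Table \ref{Table:y0(x)InfimaAndSuprema}, the two infsupum abscissas bracketing the origin on the positive side are $x_{0,0}=0$ and $x_{0,1}\approxeq 2.79839$, and these delimit branch $1$; because $0 < d \approxeq 0.739 < 2.79839$, the Dottie number lies in the domain of branch $1$. On that branch $y_{0}$ rises strictly monotonically from $y_{0}(0^{+})=-\infty$ up to the local maximum $y_{0}(x_{0,1})\approxeq 0.336508$, so its range is $(-\infty,\,0.336508)$, which contains $c_{0}=-1$. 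Hence $1\in B$ and $\mathrm{inverse}_{1}(y_{0})(-1)$ is well defined.

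The final step, and the only one demanding care, is to confirm that this branch-$1$ inverse returns $d$ itself rather than a root lying on some other branch. This follows because $y_{0}$ is \emph{strictly} monotonic on branch $1$: its inverse is single-valued there, so $\mathrm{inverse}_{1}(y_{0})(-1)$ is the \emph{unique} abscissa in $(0,\,2.79839)$ at which $y_{0}=-1$, and that abscissa is precisely $d$. The main obstacle is therefore not analytic but a matter of bookkeeping: one must justify the branch numbering by locating $d$ between the correct adjacent pair of infsupum abscissas, which rests on the monotonicity structure of $y_{0}$ displayed in Figure \ref{Figure:y0Thruy3AndOrdinateMinus1} and the refinable float infsupum abscissas tabulated in Table \ref{Table:y0(x)InfimaAndSuprema}. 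The uniqueness of the real root of $x=\cos x$ then forces $B=\{1\}$, so no other branch interferes and the corollary follows.
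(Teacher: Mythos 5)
Your proposal is correct and is essentially the paper's own proof: both arguments divide $x=\cos x$ by the nonzero quantity $-x$ to obtain $y_{0}(x)=-1$, invoke Proposition \ref{thm:yn}, and then use Table \ref{Table:y0(x)InfimaAndSuprema} to single out branch $1$. The only difference is harmless bookkeeping in that last step: the paper identifies the branch from the range side, observing that the ordinate $-1$ lies between and only between the infsupum ordinates numbered $0$ and $1$ (so $B=\{1\}$), whereas you work from the domain side, locating the Dottie number between the abscissas $x_{0,0}=0$ and $x_{0,1}\approxeq 2.79839$ and using strict monotonicity of $y_{0}$ on that branch, recovering $B=\{1\}$ afterwards from uniqueness of the fixed point.
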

\begin{proof}
$\,$

$\text{\textopenbullet}$ Divide both sides of Dottie's equation (\ref{eq:DottiesFixedPointEqn})
by nonzero $-x$, then transpose the two sides giving the first equation
in Table \ref{Table:SphericalBesselyn(z)ToTrigLarent-1} with $c_{0}=-1$,
for which the possible solutions are $\mathrm{inverse}\,_{b}\left(y_{0}\right)\left(-1\right)$
by Proposition \ref{thm:yn}.

$\text{\textopenbullet}$ Referring to Table \ref{Table:y0(x)InfimaAndSuprema},
ordinate $-1$ occurs between and only between infsupum numbers 0
and 1, making the only branch $b=1,$ which includes the Dottie number
at abscissa $\approxeq0.739085$$\,.$
\end{proof}
\begin{xca*}
Determine both a float value and an exact explicit closed form for
the Dottie number in degree mode.
\end{xca*}
The initial reason for implementing the real inverse functions$\,$
$\mathrm{inverse}\,_{b}(f)$ $\,$was to apply them to the AskConstants
float input $\tilde{x}$ giving a transformed float $\tilde{t}$ to
help determine if the application could propose an exact constant
candidate $t$ for $\tilde{t}$. If so, then a candidate for the input
float is $f(t)$. However, as illustrated by this example, a bonus
of these real inverses is to enable recognition of them too, which
is often applicable to proposing exact candidates for float equation
solutions.

Generalized Dottie equations occur rather often in the literature.
For example, despite containing only about 2500 constants, Table 1
in Robinson and Potter \cite{RobinsonAndPotter} lists 22 such constants.

That table also lists eight constants containing $\tan x$ or $\cot x$
that can be converted to the form
\[
-\dfrac{\cos x}{x^{2}}-\dfrac{\sin x}{x}=y_{1}(x)=c_{0}
\]
and are therefore exactly solvable by $\mathrm{inverse}\,_{b}\left(y_{1}\right)$$\left(c_{0}\right)$.

The expansions illustrated in Table \ref{Table:SphericalBesselyn(z)ToTrigLarent-1}
are also valid in the complex domain. However, it might require substantial
effort to extend $\mathsf{RealInverseSphericalBesselY}$ to the complex
domain efficiently and robustly \textendash{} particularly if the
order is also generalized from nonnegative integers to the complex
domain. Meanwhile, Fettis \cite{ComplexRootsOfGeneralizedDottieEtAl}
lists numeric values of five complex solutions to the generalized
Dottie equation (\ref{eq:GeneralizedDottieEqn}) for each of about
30 values of $c_{0}.$

The general technique that enabled the exact solutions discussed in
this section and subsequent ones is to recognize when the left side
of an equation in the form
\[
\mathit{expression}=\mathit{constant}
\]
is an exact transformation of some named $f(x)$ for which we can
compute and concisely represent inverses.
\begin{rem*}
Gaidash \cite{GaidashDottie} derives a different explicit exact closed-form
solution to Dottie's equation:
\[
\arcsin\left(1-2I_{1/2}^{-1}\left(\dfrac{1}{2},\dfrac{3}{2}\right)\right)
\]
 where $I_{1/2}^{-1}$ is the inverse regularized incomplete beta
function.
\end{rem*}

\subsection{Extending computer algebra Solve functions}

Mathematica permits users to enhance the Solve or the more thorough
Reduce function with rewrite rules. Therefore you could enhance those
functions to transform suitable equations to strict Laurent $\cos$
$\sin$ form, then express their exact solutions using $\mathrm{inverse}\,_{b}\left(y_{n}\right).$
Or you could instead encourage the authors of your computer algebra
systems to implement those enhancements, including numeric $\mathrm{inverse}\,_{b}\left(y_{n}\right)(x)$.

\subsection{Fixed points of $\boldsymbol{f(x)}$ are also fixed points of $\boldsymbol{\mathrm{inverse}\,_{b}(f)(x)}$}

For any function $f(x)$, if $x=f(x)$ for a particular $x=x_{0}$,
then also $x_{0}=\mathrm{inverse}\,_{b}(f)(x_{0})$ for an appropriate
branch $b$, as proved in \cite{FixedPointOfAnInverse}. Thus, for
example, Table \ref{Table:SphericalBessejnlToTrigLarent} reveals
that the Dottie number, $\mathrm{inverse}\,_{1}\left(y_{0}\right)\left(-1\right),$
is also the real solution to the equation
\[
\arccos x=x,
\]
where $\arccos$ denotes the principal branch of the multi-branched
inverse cosine function.

\subsection{The stability of a fixed point is irrelevant}

The stability of a fixed point is irrelevant for this article, because
the AskConstants $\mathsf{RealInverse}$ functions use $\mathsf{FindRoot}$
rather than fixed point iteration, and $\mathsf{FindRoot}$ uses faster
more robust iterations.

\section{Inverses of $\boldsymbol{j_{n}(x)}$ solve some different equations
containing $\boldsymbol{\sin x/x^{\ell}}$ and possibly also $\boldsymbol{\cos x/x^{\ell}}$\label{sec:SolvableViajn}}

After proving then generalizing the AskConstants conjecture for the
Dottie number, I decided to investigate some analogous results for
some other inverse spherical Bessel functions. As illustrated by Table
\ref{Table:SphericalBessejnlToTrigLarent}, spherical Bessel $j_{n}(x)$
also has exact closed form representations similar to those for $y_{n}(x),$
except that $1/x^{n+1}$ multiplies $\sin x$ rather than $\cos x.$
The expansion for $n=0$ is also known as the sinc function. These
table entries can be computed for nonzero $x$ from another Rayleigh
formula for nonnegative integer $n$ that is supplemented here to
be correct also for $x=0$:
\[
j_{n}(x):=\begin{cases}
(-x)^{n}\left(\dfrac{1}{x}\dfrac{d}{dx}\right)^{n}\dfrac{\sin x}{x}, & x\neq0;\\
\begin{cases}
1, & n=0;\\
0, & n\neq0;
\end{cases} & x=0,
\end{cases}
\]
or from $j_{0}(x)$, $j_{1}(x)$, and the recurrence
\begin{equation}
j_{n+1}(x):=\dfrac{2n+1}{x}j_{n}(x)-j_{n-1}(x).\label{eq:RecurrenceForjn}
\end{equation}

\begin{table}[H]
\caption{Exact strict Laurent sin cos expansions of spherical Bessel functions
$j_{n}(x)$}
\label{Table:SphericalBessejnlToTrigLarent}
\noindent \centering{}%
\begin{tabular}{|c|c|c|}
\hline 
$\!\!n\!\!$ & Partially expanded $j_{n}(x)=c_{0}$ & $\!$Solutions $\forall$ branches $b$ containing $c_{0}$$\!$\tabularnewline
\hline 
\hline 
$\!\!0\!\!$ & $\begin{cases}
\dfrac{\sin x}{x}, & x\neq0;\\
1, & x=0\,
\end{cases}=c_{0}$ & $x=\mathrm{inverse}\,_{b}(j_{\boldsymbol{\mathbf{0}}})(c_{0})$\rule[-22pt]{0pt}{50pt}\tabularnewline
\hline 
$\!\!1\!\!$ & $\begin{cases}
\dfrac{\sin x}{x^{2}}-\dfrac{\cos x}{x}, & x\neq0;\\
0, & x=0\,
\end{cases}=c_{0}$ & $x=\mathrm{inverse}\,_{b}(j_{\boldsymbol{\boldsymbol{1}}})(c_{0})$\rule[-22pt]{0pt}{50pt}\tabularnewline
\hline 
$\!\!2\!\!$ & $\begin{cases}
\left(\dfrac{3}{x^{3}}-\dfrac{1}{x}\right)\sin x-\dfrac{3}{x^{2}}\cos x, & x\neq0;\\
0, & x=0\,
\end{cases}=c_{0}$ & $x=\mathrm{inverse}\,_{b}(j_{\boldsymbol{\boldsymbol{2}}})(c_{0})$\rule[-22pt]{0pt}{50pt}\tabularnewline
\hline 
$\!\!3\!\!$ & $\!\!\begin{cases}
\left(-\frac{15}{x^{4}}+\frac{6}{x^{2}}\right)\sin x+\left(-\frac{15}{x^{3}}+\frac{1}{x}\right)\cos x, & \!\!\!x\neq0;\\
0, & x=0\,
\end{cases}=c_{0}\!\!$ & $x=\mathrm{inverse}\,_{b}(j_{\boldsymbol{3}})(c_{0})$\rule[-18pt]{0pt}{44pt}\tabularnewline
\hline 
$\vdots$ & $\vdots$ & $\vdots$\tabularnewline
\hline 
\end{tabular}
\end{table}

Thus a procedure similar to that described in Section \ref{sec:SolvableViayn}
can determine exact closed-form solutions to some equations that are
transformable to one of the equations in the middle column of Table
\ref{Table:SphericalBessejnlToTrigLarent}.

As an example, OEIS constant A199460 \cite{OEIS} is the one \textsl{positive}
solution to
\begin{equation}
\sin x=\dfrac{1}{2}x,\label{eq:SinxEqxOn2}
\end{equation}
which is equivalent to the equation
\[
\dfrac{\sin x}{x}=j_{0}(x)=\dfrac{1}{2}
\]
for our \textsl{nonzero} $x$, making $x=\mathrm{inverse}\,_{b}(j_{0})(1/2)\:$$\approxeq1.89549$
for the appropriate branch $b$.

Figure \ref{Figure:sinxEqxOn2} plots the two sides of equation (\ref{eq:SinxEqxOn2}).
Figure \ref{Figure:j0Thruj3AndOrdinate1/2} plots $j_{0}(x)$ through
$j_{3}(x)$ with each order a different color, together with a horizontal
line through ordinate 1/2. Comparing these two figures, notice how
dividing both sides of equation (\ref{eq:SinxEqxOn2}) by $x$ annihilated
the unsought solution $x=0\,.$
\noindent \begin{center}
\begin{figure}[H]
\caption{Real solutions to $\sin x=x/2$}

\label{Figure:sinxEqxOn2}
\noindent \centering{}\includegraphics{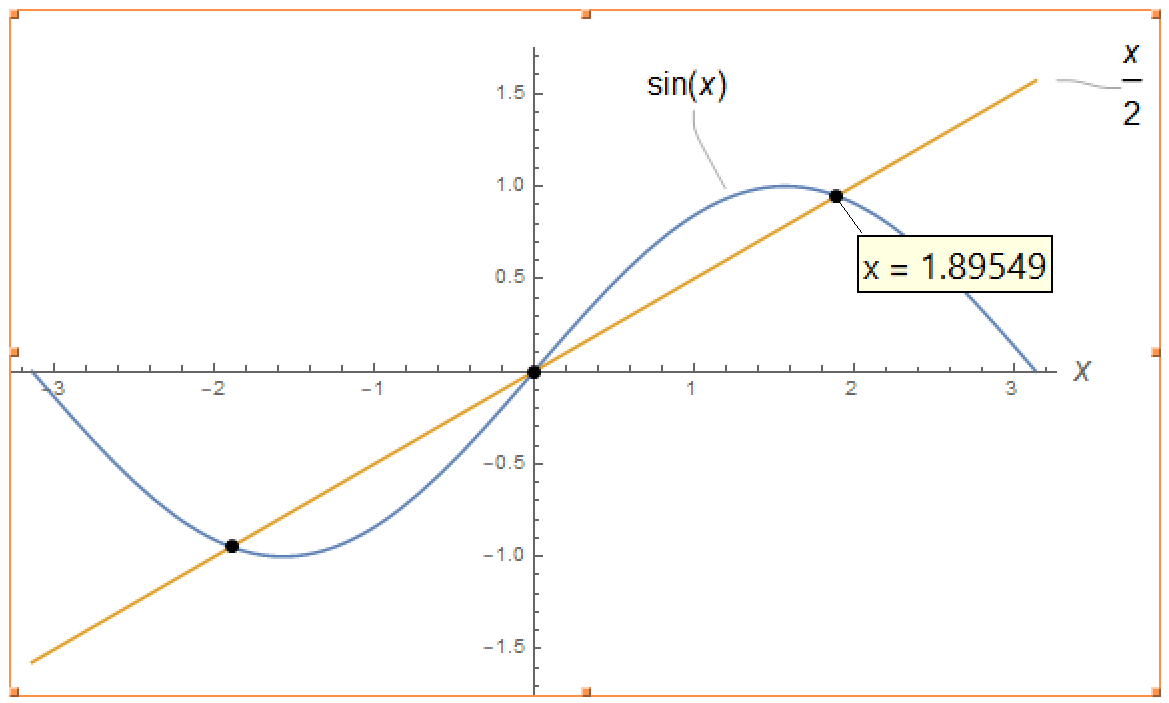}
\end{figure}
\par\end{center}

\noindent \begin{center}
\begin{figure}[h]
\caption{Spherical Bessel $j_{0}(x)$ through $j_{3}(x)$ and ordinate 1/2 }

\label{Figure:j0Thruj3AndOrdinate1/2}
\noindent \centering{}\includegraphics{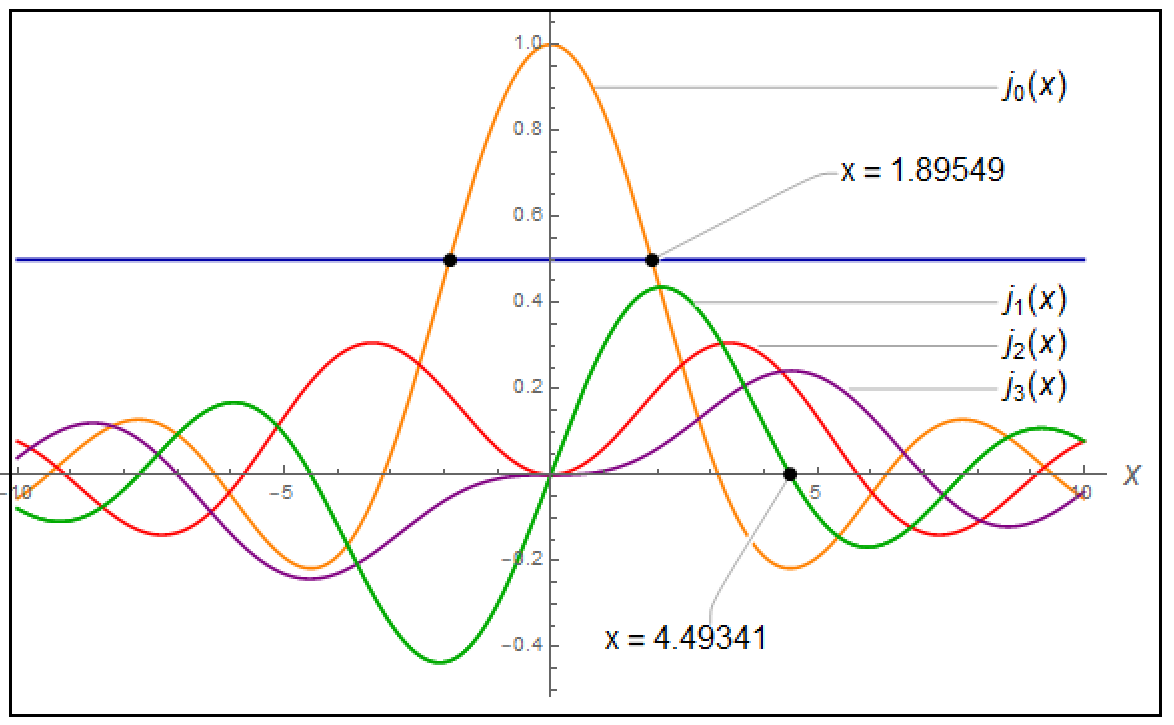}
\end{figure}
\par\end{center}

Table \ref{Table:InfimaAndSupremaOfj0(x)} lists six digits of the
abscissas and ordinates of some infima and suprema of $j_{0}(x)$.

\begin{table}[H]

\caption{Some of the countably infinite number of infima and suprema of $j_{0}(x)$ }

\label{Table:InfimaAndSupremaOfj0(x)}
\noindent \begin{centering}
\begin{tabular}{|c|c|c|}
\hline 
\begin{tabular}{c}
infsupum\tabularnewline
number $m$\tabularnewline
\end{tabular} & %
\begin{tabular}{c}
abscissa\tabularnewline
$x_{0,m}$\tabularnewline
\end{tabular} & %
\begin{tabular}{c}
ordinate\tabularnewline
$j_{0}\left(x_{0,m}\right)$\tabularnewline
\end{tabular}\tabularnewline
\hline 
\hline 
$\vdots$ & $\vdots$ & $\vdots$\tabularnewline
\hline 
-2 & -7.72525 & 0.128375\tabularnewline
\hline 
-1 & -4.49341 & -0.217234\tabularnewline
\hline 
0 & 0.00000 & 1.00000\tabularnewline
\hline 
1 & 4.49341 & -0.217234\tabularnewline
\hline 
2 & 7.72525 & 0.128375\tabularnewline
\hline 
$\vdots$ & $\vdots$ & $\vdots$\tabularnewline
\hline 
\end{tabular}
\par\end{centering}
\end{table}

Figure \ref{Figure:RealInverseSphericalBesselJ =00005B0,1/2,1=00005D}
superimposes plots of a vertical line through abscissa $1/2$ and
the order zero AskConstants function $\mathrm{\mathsf{RealInverseSphericalBesselJ}\,}[0,t,b]$
for branches $b=-2$ through 3, with each branch a different color.
Here branches 0 and 1 of the inverse function intersect the vertical
line through abscissa $t=1/2$ only at ordinates $\mathrm{inverse}\,_{\boldsymbol{0}}\left(j_{0}\right)\left(1/2\right)\approxeq-1.89549$
and $\mathrm{inverse}\boldsymbol{\,}_{\boldsymbol{1}}\left(j_{0}\right)\left(1/2\right)\approxeq1.89549$$\,.$

\begin{figure}[H]

\caption{$\mathsf{RealInverseSphericalBesselJ}$$\,[0,1/2,1]$ solves $\sin x=\frac{1}{2}x$}

\label{Figure:RealInverseSphericalBesselJ =00005B0,1/2,1=00005D}
\noindent \centering{}\includegraphics{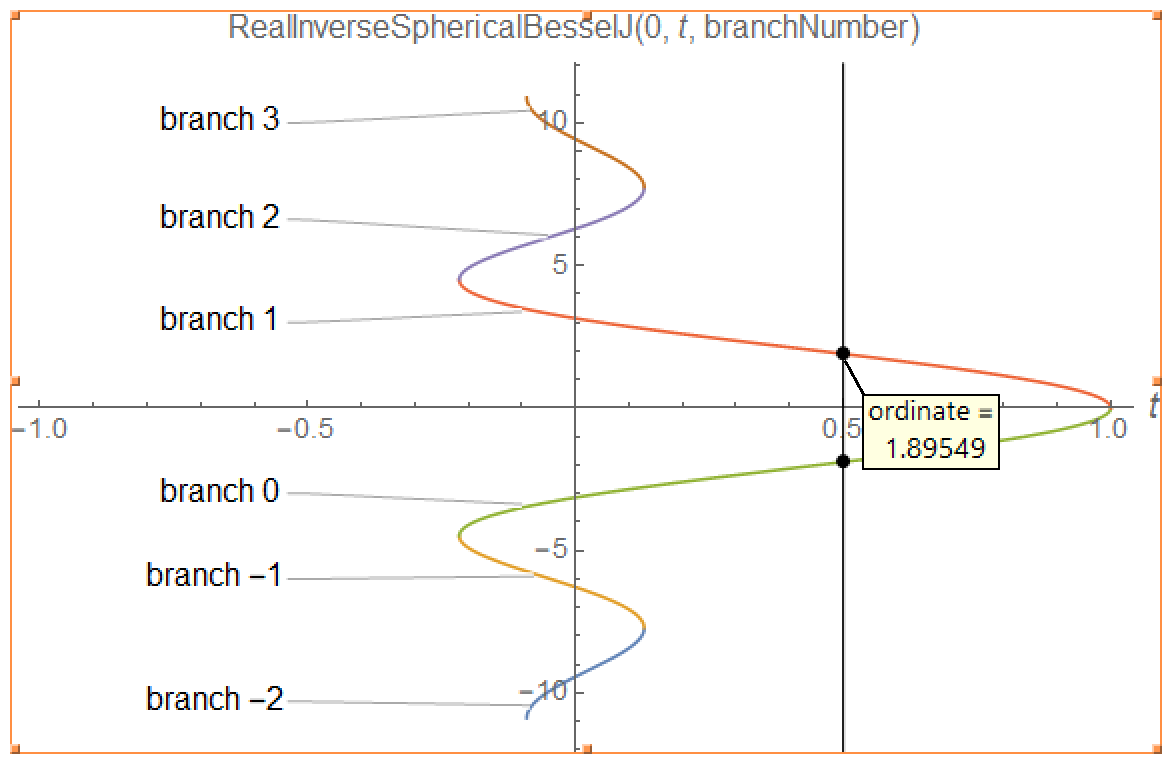}
\end{figure}

Figure \ref{Figure:PlotLInesTangentToSinx} shows the three lines
through the origin tangent to $\sin x$ closest to the origin. The
tangent points are at abscissas $x_{0,m}$ and ordinates $\sin x_{0,m}$
in Table \ref{Table:InfimaAndSupremaOfj0(x)}. These tangent lines
and similar ones for larger $\left|m\right|$ suggest how the number
of real solutions to the generalized equation
\begin{equation}
\sin x=c_{0}\,x\label{eq:SinxEqc0x}
\end{equation}
increases from 1 to 3, then 5, etc. as the absolute value of $c_{0}$
decreases from $+\infty$ to 0 where there are a countably infinite
number of solutions, then decreases to 1 as $c_{0}$ further decreases
toward $-\infty$.

Fettis \cite{ComplexRootsOfGeneralizedDottieEtAl} lists numeric values
of five complex solutions to the generalized equation (\ref{eq:SinxEqc0x})
for about 30 values of $c_{0}.$

\begin{figure}[H]
\caption{Some lines through the origin tangent to $\sin x$}

\label{Figure:PlotLInesTangentToSinx}
\noindent \centering{}\includegraphics{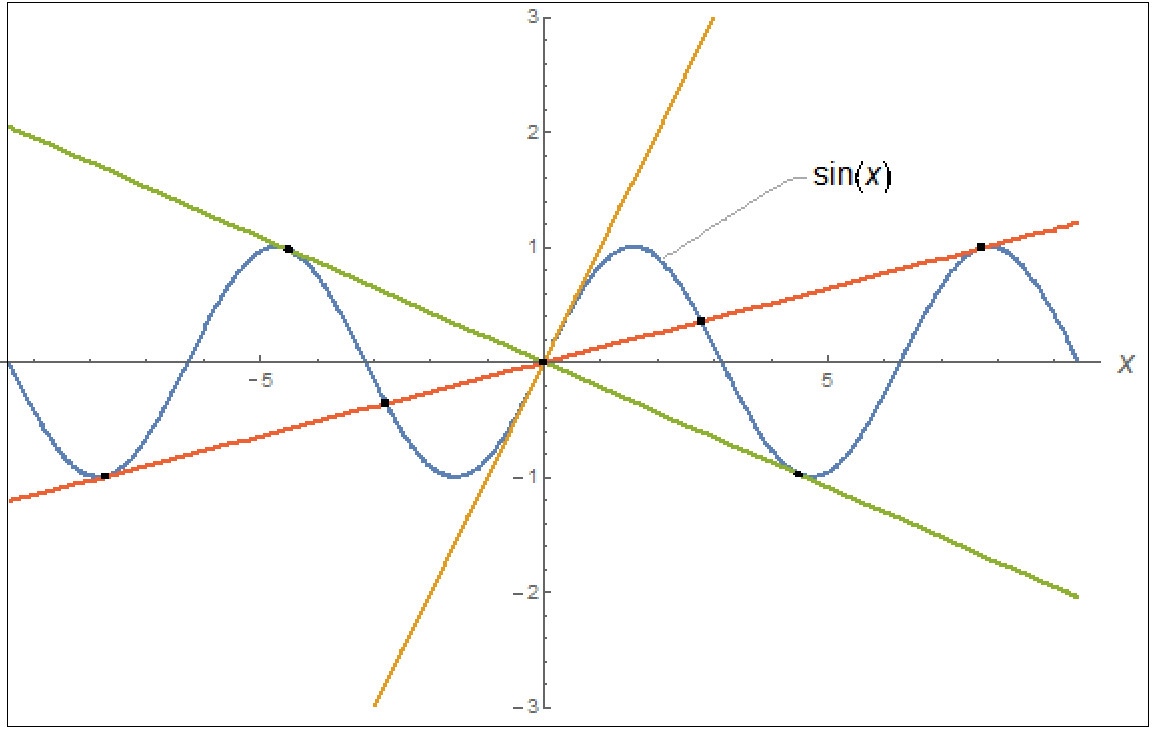}
\end{figure}

To summarize for spherical Bessel $j_{n}(x)$:
\begin{prop}
For real $x$ and an equation of the form
\begin{equation}
p(x)\sin x+q(x)\cos x=c_{0}\label{eq:jnOfxeqc0}
\end{equation}
from row $n$ of the countably infinite number of rows partially listed
in Table \ref{Table:SphericalBessejnlToTrigLarent} that can be extended
by recurrence $\mathrm{(}$\ref{eq:RecurrenceForjn}$\mathrm{)}$,
let $x_{n,m}$ denote the $m^{\mathrm{th}}$ infsupum abscissa of
$the$ left side and let $B$ denote the set of all real branches
$b$ of $\mathrm{inverse}\,_{b}(y_{n})$ for which the interval
\[
[j_{n}(x_{n,\,b-1}),j_{n}(x_{n,b}))
\]
contains constant $c_{0}.$

Then all of the real solutions to equation $(\ref{eq:jnOfxeqc0})$,
if any, are
\[
x=\mathrm{inverse}\,_{b}(j_{n})(c_{0})
\]
where branches $b$ are all elements of set $B$.
\end{prop}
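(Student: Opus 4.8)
The plan is to mirror the three-step argument already used for Proposition \ref{thm:yn}, replacing $y_n$ by $j_n$ throughout, since the two propositions differ only in which Bessel family appears. First I would observe that the left side of equation (\ref{eq:jnOfxeqc0}) is identically equal to $j_n(x)$ for each row $n$ of Table \ref{Table:SphericalBessejnlToTrigLarent}: this follows from the supplemented Rayleigh formula for $j_n(x)$, which generates exactly the strict Laurent sin cos expansions in the middle column, together with recurrence (\ref{eq:RecurrenceForjn}), which extends the table to every nonnegative integer order. Hence solving equation (\ref{eq:jnOfxeqc0}) is equivalent to solving $j_n(x)=c_0$ over the reals.

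Second I would appeal to the definition of the real inverse: $\mathrm{inverse}\,_b(j_n)(c_0)$ is the unique real $x$ with $j_n(x)=c_0$ lying in the maximally monotonic interval bounded by consecutive infsupum abscissas $x_{n,b-1}$ and $x_{n,b}$, that is, in the interval whose ordinate range is $[j_n(x_{n,b-1}),j_n(x_{n,b}))$. Because $j_n$ is real-valued and continuous on all of $\mathbb{R}$ and strictly monotonic on each such interval, each branch contributes at most one solution, and that solution exists precisely when $c_0$ lies in the branch's ordinate interval, that is, exactly when $b\in B$. Third I would conclude that collecting $\mathrm{inverse}\,_b(j_n)(c_0)$ over all $b\in B$ yields every real solution and no spurious ones, since the maximally monotonic intervals partition $\mathbb{R}$.

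The one place where care is needed---and the only genuine difference from the $y_n$ proof---is the point $x=0$. Unlike $y_n$, which has a pole there, $j_n$ is entire, so $x=0$ is an ordinary point of the domain with $j_0(0)=1$ and $j_n(0)=0$ for $n>0$; this is precisely why the Rayleigh formula had to be supplemented at the origin. Consequently the infsupum at $m=0$ in Table \ref{Table:InfimaAndSupremaOfj0(x)} is a finite extremum rather than a singularity, and I would verify that the branch-numbering convention places $x=0$ correctly as a shared endpoint of branches $0$ and $1$, so that a solution at or adjacent to the origin is neither double-counted nor dropped. I expect this endpoint bookkeeping to be the only subtle step; everything else transcribes essentially verbatim from the proof of Proposition \ref{thm:yn}.
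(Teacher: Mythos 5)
Your proposal is correct and takes essentially the same route as the paper, whose entire proof of this proposition is the single line ``Similar to that of Proposition \ref{thm:yn}''---i.e., exactly the three-step transcription ($j_n$ equals the left side via the Rayleigh formula and recurrence; each branch of the inverse gives the unique solution in its maximally monotonic interval; the branches in $B$ collectively capture all solutions) that you carry out. Your additional care about $x=0$ being a finite extremum of the entire function $j_n$ rather than a pole is a sensible refinement the paper does not spell out, but it does not change the approach.
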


\begin{proof}
$\,$Similar to that of Proposition \ref{thm:yn}.
\end{proof}
As an example for higher-order Bessel $j_{n}$, OEIS \cite[A115365]{OEIS}
lists the float value $4.49341$ of the least positive solution to
\begin{equation}
\tan x-x=0,\label{eq:A115365TanxEq2x}
\end{equation}
which is the abscissa of the least positive local minimum of $j_{0}(x)=\mathrm{sinc}\,(x)$.

Multiplying both sides of equation (\ref{eq:A115365TanxEq2x}) by
$(\cos x)/x^{2}$ gives for $x\neq0$
\begin{equation}
\dfrac{\sin x}{x^{2}}-\dfrac{\cos x}{x}=j_{1}(x)=0\label{eq:j1(x) =00003D 0}
\end{equation}
from Table 3. Figure \ref{Figure:j0Thruj3AndOrdinate1/2} reveals
that the least positive solution to equation (\ref{eq:j1(x) =00003D 0})
is in branch $b=2$ of $\mathrm{inverse}\,_{b}(j_{1})(0)$, making
$x=\mathrm{inverse}\,_{2}(j_{1})(0)\approxeq4.49341$. Because of
the identity
\[
j_{n}(z)=\sqrt{\dfrac{\pi}{2z}}J_{n+1/2}(z)
\]
and $c_{0}=0,$ Vladimir Reshetnikov \cite[A115365]{OEIS} could express
the solution as the Mathematica function invocation $\mathsf{BesselJZero}\,[3/2,1]$.

\subsection{Kepler's equation for elliptic orbits is a near miss}
\begin{flushright}
``\textsl{Kepler's equation is central to orbital mechanics}.''\\
\textendash{} me, being whimsical.
\par\end{flushright}

A common form of Kepler's equation for elliptic orbits is
\begin{equation}
M=E-e\sin E\label{eq:KeplersElliptical}
\end{equation}
where $0\leq e<1$ is the \textsl{eccentricity}, $-\pi<M\leq\pi$
is the mean anomaly, and $E$ is the eccentric anomaly. This equation
is widely regarded as having no currently known explicit exact closed
form solution when $E$ is the unknown, although a few authors regard
solutions containing raw integrals or infinite series as closed forms.

Rearranging terms and factors of equation (\ref{eq:KeplersElliptical})
into
\[
\sin E=\dfrac{1}{e}E-\dfrac{M}{e},
\]
which matches the related equation form for $n=0$ in Table \ref{Table:SphericalBessejnlToTrigLarent}
with slope $c_{0}=1/e$ only when $M=0\,.$ However, $1/e\geq1,$
and Figure \ref{Figure:PlotLInesTangentToSinx} illustrates that the
only solution is then $E=0$$\,.$

But it makes me wonder if there might be an infinite series of the
form
\[
E=M+\sum_{n=1}^{\infty}a_{m}\,j_{n}(ne)\sin(nM)
\]
with appropriate coefficients $a_{m}$ that has better properties
than the well known series representation
\[
E=M+\sum_{n=1}^{\infty}\dfrac{2}{n}J_{n}(ne)\sin(nM)
\]
for $e<1$ and $-\pi\leq M\leq\pi$, where $J_{n}$ is the ordinary
Bessel $J$ function. Spherical Bessel functions seem more appropriate
than the ordinary (cylindrical) Bessel functions for the inverse square
law of Newtonian gravity.

\section{Inverses of $\boldsymbol{i_{n}(x)}$ solve some equations containing
$\boldsymbol{\sinh x/x^{\ell}}$ and possibly also $\boldsymbol{\cosh x/x^{\ell}}$\label{sec:Hyperbolic}}

Inverses of the modified spherical Bessel function of the first kind,
$i_{n}(x)$, can solve equations analogous to those of Section \ref{sec:SolvableViajn}
but containing $\sinh$ and $\cosh$ rather than $\sin$ and $\cos$.
These functions have analogous exact expansions partially displayed
in Table \ref{Table:SphericalBesselinToSinhCoshLaurent}. These table
entries can be computed for $x\neq0$ and nonnegative $n$ from a
Rayleigh-type formula that is supplemented here to be correct also
for $x=0:$
\[
i_{n}(x):=\begin{cases}
x^{n}\left(\dfrac{1}{x}\dfrac{d}{dx}\right)^{n}\dfrac{\sinh x}{x}, & x\neq0;\\
\begin{cases}
1, & n=0;\\
0, & n>0,
\end{cases} & x=0;
\end{cases}
\]
or from $i_{0}(x)$, $i_{1}(x)$, and the recurrence
\begin{equation}
i_{n+1}(x):=i_{n-1}(x)-\dfrac{2n+1}{x}i_{n}(x).\label{eq:Recurrencein}
\end{equation}

\noindent \begin{center}
\begin{table}[h]
\caption{Exact strict Laurent sinh cosh expansions of spherical Bessel functions
$i_{n}(z)$\rule[-1pt]{0pt}{14pt}}
\label{Table:SphericalBesselinToSinhCoshLaurent}
\noindent \centering{}%
\begin{tabular}{|c|c|c|}
\hline 
$\!$$\!n\!\!$ & Partially expanded $i_{n}(x)=c_{0}$ & $\!\!$Solutions $\forall$ branches $b$ containing $c_{0}$$\!$$\!$\rule[-1pt]{0pt}{14pt}\tabularnewline
\hline 
\hline 
$\!\!0\!\!$ & $\begin{cases}
\dfrac{\sinh x}{x}, & x\neq0;\\
1, & x=0\,
\end{cases}=c_{0}$ & $x=\mathrm{inverse}\,_{b}(i_{\boldsymbol{\boldsymbol{0}}})(c_{0})$\rule[-20pt]{0pt}{47pt}\tabularnewline
\hline 
$\!\!1\!\!$ & $\begin{cases}
-\dfrac{\sinh x}{x^{2}}+\dfrac{\cosh x}{x}, & x\neq0;\\
0, & x=0
\end{cases}=c_{0}$ & $x=\mathrm{inverse}\,_{b}(i_{\boldsymbol{\boldsymbol{1}}})(c_{0})$\rule[-19pt]{0pt}{46pt}\tabularnewline
\hline 
$\!\!2\!\!$ & $\begin{cases}
\left(\dfrac{3}{x^{3}}+\dfrac{1}{x}\right)\sinh x-\dfrac{3}{x^{2}}\cosh x, & x\neq0;\\
0, & x=0
\end{cases}=c_{0}$ & $x=\mathrm{inverse}\,_{b}(i_{\boldsymbol{\boldsymbol{2}}})(c_{0})$\rule[-13pt]{0pt}{42pt}\tabularnewline
\hline 
$\!\!3\!\!$ & $\!\!\begin{cases}
\left(\!-\frac{15}{x^{4}}-\frac{6}{x^{2}}\!\right)\sinh x+\left(\!-\frac{15}{x^{3}}+\frac{1}{x}\!\right)\cosh x,\!\!\! & x\neq0;\!\\
0, & \:x=0\,\,
\end{cases}\!=c_{0}\!\!$ & $\!$$\begin{array}{c}
\!\!x=\mathrm{inverse}\,_{b}(i_{\boldsymbol{\boldsymbol{3}}})(c_{0})\!\end{array}\!$\rule[-17pt]{0pt}{40pt}\tabularnewline
\hline 
$\vdots$ & $\vdots$ & $\vdots$\tabularnewline
\hline 
\end{tabular}
\end{table}
\par\end{center}

Mathematica through version 12.1 has no built-in spherical Bessel
function $i_{n}$ or its inverses, but I have implemented them for
inclusion in the next version of AskConstants after 5.0, using the
techniques described in Section \ref{sec:SolvableViayn}.

Figure \ref{Figure:i0Thrui3AndPi} shows a plot of spherical Bessel
$i_{0}(x)$ through $i_{3}(x)$.

\begin{figure}[h]
\caption{A plot of $\pi$ and spherical Bessel $i_{0}\left(x\right)$ through
$i_{3}(x)$}
\label{Figure:i0Thrui3AndPi}
\noindent \centering{}\includegraphics[viewport=0bp 0bp 293.9811bp 189.25bp]{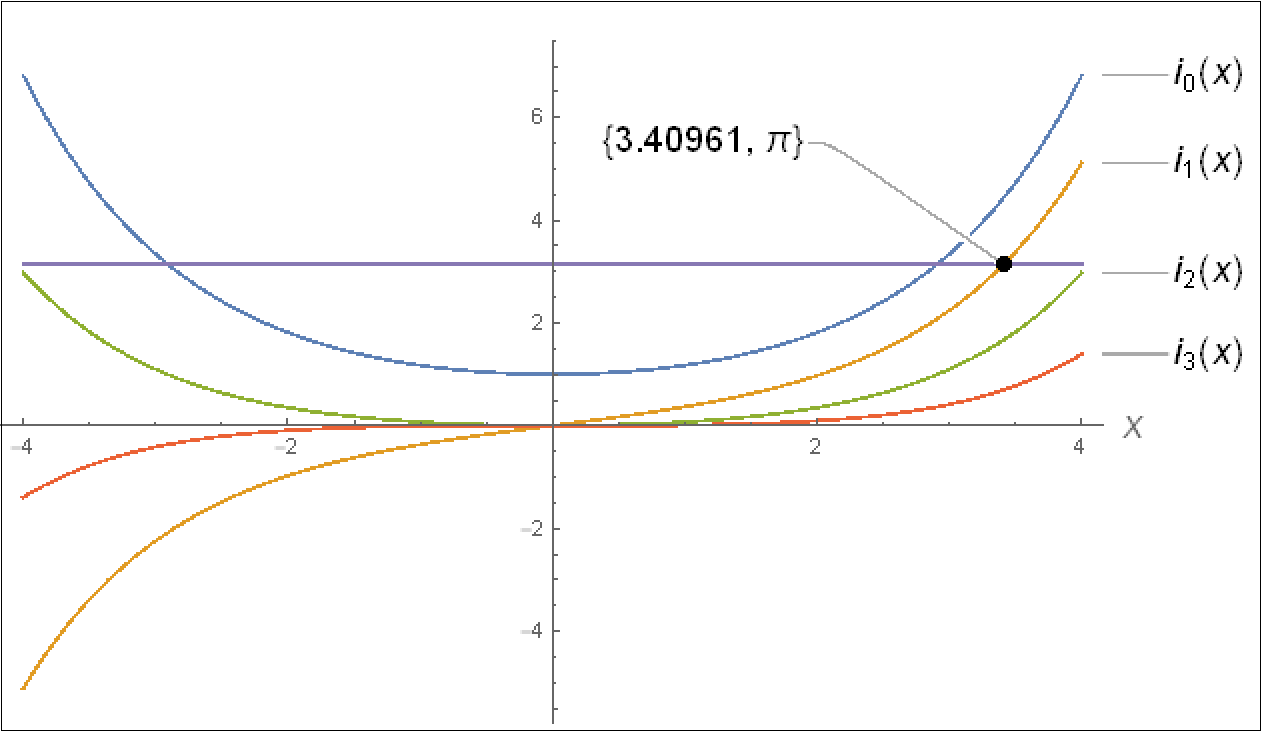}
\end{figure}
As suggested by Figure \ref{Figure:i0Thrui3AndPi} and Table \ref{Table:SphericalBesselinToSinhCoshLaurent}:
\begin{enumerate}
\item The even-order functions have even symmetry and the odd-order functions
have odd symmetry.
\item The even order functions only have real branches numbered 0 and 1.
\item The only real branch of odd order functions is branch 1.
\item For real $c_{0},$ any equation transformable to a form partially
listed in the rightmost column of Table \ref{Table:SphericalBesselinToSinhCoshLaurent}
has
\end{enumerate}
\begin{itemize}
\item one real solution for odd $n,$
\item one solution for $n=0$ and $c_{0}=1$ or two solutions for $n=0$
and $c_{0}>1\,,$
\item one solution for even $n\geq2$ and $c_{0}=0$ or two solutions for
even $n\geq2$ and $c_{0}>0\,.$
\end{itemize}
To summarize for spherical Bessel $i_{n}(x):$
\begin{prop}
For real $x$ and an equation of the form
\begin{equation}
p(x)\sinh x+q(x)\cosh x=c_{0}\label{eq:inOfxeqc0}
\end{equation}
from row $n$ of the countably infinite number of rows partially listed
in Table \ref{Table:SphericalBesselinToSinhCoshLaurent} that can
be extended by recurrence $\mathrm{(}$\ref{eq:Recurrencein}$\mathrm{)}$,
let $x_{n,m}$ denote the $m^{\mathrm{th}}$ infsupum abscissa of
$the,$left side and let $B$ denote the set of all real branches
b of $\mathrm{inverse}\,_{b}(y_{n})$ for which the interval
\[
[i_{n}(x_{n,\,b-1}),i_{n}(x_{n,b}))
\]
contains $c_{0}.$

Then all of the real solutions to equation $(\ref{eq:inOfxeqc0})$,
if any, are
\begin{equation}
x=\mathrm{inverse}\,_{b}(i_{n})(c_{0})\label{eq:xEqInvbOfinOfc0}
\end{equation}
where branches $b$ are all elements of set $B$.
\end{prop}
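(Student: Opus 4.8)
The plan is to follow exactly the three-step template already used for Proposition \ref{thm:yn}, since the only genuine novelty here is that $\sinh$ and $\cosh$ replace the oscillatory $\cos$ and $\sin$. First I would invoke the supplemented Rayleigh-type formula for $i_{n}(x)$ displayed just above Table \ref{Table:SphericalBesselinToSinhCoshLaurent} to certify that the left side $p(x)\sinh x+q(x)\cosh x$ of equation (\ref{eq:inOfxeqc0}), with the integer coefficients appearing in row $n$, is identically equal to $i_{n}(x)$ for $x\neq0$ and equal to the stated finite value at $x=0$. For any row obtained by extension this follows by induction on $n$ from the $n=0,1$ entries using the recurrence (\ref{eq:Recurrencein}), since that recurrence preserves the strict-Laurent $\sinh$/$\cosh$ shape. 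This reduces (\ref{eq:inOfxeqc0}) to the single equation $i_{n}(x)=c_{0}$.

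Next I would recall the definition of $\mathrm{inverse}_{b}(i_{n})(c_{0})$: one partitions the real $x$-axis into the maximally monotonic intervals of $i_{n}$ delimited by its infsupum abscissas $x_{n,m}$, and branch $b$ is the restriction of $i_{n}$ to the interval whose right endpoint is $x_{n,b}$. On that interval $i_{n}$ is continuous and strictly monotone, so by the intermediate value theorem it attains each value of its range $[i_{n}(x_{n,b-1}),i_{n}(x_{n,b}))$ exactly once; hence $\mathrm{inverse}_{b}(i_{n})(c_{0})$ is the unique solution of $i_{n}(x)=c_{0}$ in that branch precisely when $c_{0}$ lies in that half-open range. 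Collecting over all branches whose range contains $c_{0}$, that is over all $b\in B$, then produces every real solution and no spurious ones, which is the assertion (\ref{eq:xEqInvbOfinOfc0}).

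The main obstacle, and the only place where this argument differs in substance from the $y_{n}$ and $j_{n}$ cases, is establishing the branch structure of $i_{n}$. Because $\sinh$ and $\cosh$ grow monotonically rather than oscillate, $i_{n}$ has only \emph{finitely} many monotonic pieces rather than countably many, as recorded in the enumerated list preceding the proposition (branches $0$ and $1$ for even $n$, branch $1$ alone for odd $n$). I would justify this by analyzing the sign of $i_{n}'(x)$: the even/odd parity of $i_{n}$ follows from the parity of its Laurent coefficient structure, and the key computation is to show that $i_{n}'$ vanishes only at the origin for even $n$ and nowhere for odd $n$. Thus for odd $n$ the function is strictly monotone on all of $\mathbb{R}$, while for even $n$ it is strictly monotone on each of $(-\infty,0]$ and $[0,\infty)$ with its extremum anchored at $x=0$, where the supplemented formula supplies the finite value $i_{n}(0)$. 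Verifying this single-stationary-point claim, together with the limiting growth to $+\infty$, is what guarantees that the intervals $[i_{n}(x_{n,b-1}),i_{n}(x_{n,b}))$ tile the range of $i_{n}$ without gaps or overlaps, so that the solution counts listed before the proposition come out correctly.
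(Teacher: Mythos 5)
Your first two paragraphs reproduce exactly the paper's argument: the paper proves this proposition by the same three-step template as Proposition \ref{thm:yn} (identify the left side with $i_{n}(x)$ via the Rayleigh-type formula and the recurrence, invoke the definition of $\mathrm{inverse}\,_{b}(i_{n})$ as the unique solution of $i_{n}(x)=c_{0}$ in each maximally monotonic interval, and collect over all branches in $B$). That part is correct, and it is all the paper itself does \textendash{} its proof of this proposition is literally ``Similar to that of Proposition \ref{thm:yn}.''

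Your third paragraph goes beyond the paper, and it is where you introduce an error. You propose, as ``the key computation,'' to show that $i_{n}'$ vanishes only at the origin for even $n$ and \emph{nowhere} for odd $n$. The second half is false for odd $n\geq3$: near the origin $i_{n}(x)\sim x^{n}/(2n+1)!!$, so for example $i_{3}'(0)=0$. The correct statement is that for odd $n$ the derivative does not change sign (it is positive except possibly at the isolated origin, where the vanishing derivative marks an inflection rather than an extremum), so $i_{n}$ is still strictly monotone on all of $\mathbb{R}$ and branch 1 remains the only branch; a vanishing derivative at an isolated point does not terminate a maximally monotonic interval. The repair is easy, but note also that none of this branch counting is actually needed for the proposition as stated: the branch-inverse logic \textendash{} every real solution lies in some maximally monotonic interval, and within that interval it equals $\mathrm{inverse}\,_{b}(i_{n})(c_{0})$ precisely when $c_{0}$ lies in that branch's range \textendash{} is indifferent to whether there are finitely or countably many branches. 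So what you call the ``main obstacle'' is really a side-verification of the enumerated observations preceding the proposition, not an obstacle to the proof itself.
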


\begin{proof}
$\,$Similar to that of Proposition \ref{thm:yn}.
\end{proof}
I have not found an example of such equations published in paper or
on-line form, refereed or not. I welcome published examples and unpublished
ones that have a natural application.

Because a fixed point of $f(x)$ is a fixed point of $\mathrm{inverse}\,_{b}(f)$,
$\mathrm{inverse}\,_{1}(i_{0})(c_{0})$ can also solve equations transformable
to the form
\[
\dfrac{\mathrm{arcsinh}\,x}{x}=c_{0},
\]
but I have not found a published examples for equations equivalent
to that either.

\subsection{Kepler's equation for hyperbolic paths is also a near miss}

A common form of Kepler's equation for hyperbolic paths is
\begin{equation}
M=e\sinh H-H\label{eq:KeplersHyperbolic}
\end{equation}
where $e>1$ is the \textsl{eccentricity}, $M$ is the mean anomaly,
and $H$ is the hyperbolic eccentric anomaly. This equation is also
widely regarded as currently having no known explicit closed form
solution when $H$ is the unknown.

The terms and factors of equation (\ref{eq:KeplersHyperbolic}) can
be rearranged into
\[
\sinh H=\dfrac{1}{e}H+\dfrac{M}{e},
\]
which matches the related equation form for $n=0$ in Table \ref{Table:SphericalBesselinToSinhCoshLaurent}
with $c_{0}=1/e$ only when $M=0\,.$ However, line slope $1/e<1$
for hyperbolic paths, and Figure \ref{Figure:SinhxAnd3Slopes} illustrates
that the only solution is then $H=0$$\,.$

\begin{figure}[H]
\caption{$\mathrm{inverse}\,(i_{0})$ does not generally solve the hyperbolic
Kepler equation }

\label{Figure:SinhxAnd3Slopes}
\noindent \centering{}\includegraphics{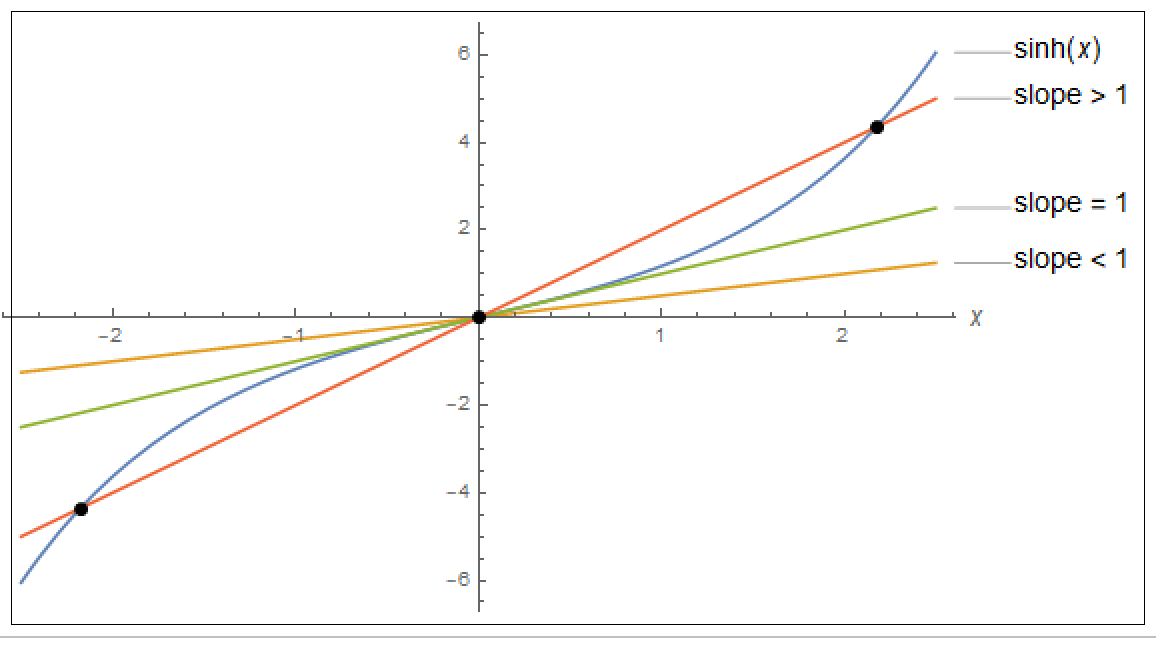}
\end{figure}

But it makes me wonder if there might be a useful infinite series
of the form
\[
H=M+\sum_{n=1}^{\infty}a_{m}i_{n}(ne)\sin(nM)
\]
with appropriate coefficients $a_{m}$.

\section{inverse$_{\boldsymbol{b}}\boldsymbol{\left(k_{n}\right)}$ generalizes
Lambert $W$\label{sec:LambertW}}

Some equations transformable to the form $\left({\textstyle \sum_{\ell=1}^{n}}\dfrac{c_{\ell}}{x^{\ell}}\right)e^{-x}=c_{0}$
can be solved by the inverse spherical Bessel function of the second
kind, inverse$_{b}\left(k_{n}\right)$$\left(c_{0}\right)$, where
$c_{\ell}$ are certain integer constants and $c_{0}$ is any nonzero
constant in the range of $k_{n}(x).$

For nonnegative integer $n$, $k_{n}(x)$ can be expressed in strict
Laurent negative exponential form, as listed in Table \ref{Table:SphericalBesselToExponentialLarent},
which is adapted from Weisstein \cite{WeissteinModifiedSphericalBesselFunctions2ndKind},
who also lists a recurrence for the rational factors $q_{n}(x)$ multiplying
$e^{-x}$:
\begin{equation}
q_{n}(x)=q_{n-1}(x)+\dfrac{2n-1}{x}q_{n-2},\label{eq:RecurrenceForkn}
\end{equation}
which can be used to extend the table, computing $k_{n}(x)$ for larger
$n$ from $k_{0}(x)$ and $k_{1}(x)$.

Alternatively, a Rayleigh-type formula is
\[
k_{n}(x)=(-x)^{n}\left(\dfrac{1}{x}\dfrac{d}{dx}\right)^{n}\dfrac{e^{-x}}{x}.
\]
\begin{table}[H]
\caption{Exact exponential expansions of spherical Bessel functions $k_{n}(x)$}
\label{Table:SphericalBesselToExponentialLarent}
\noindent \centering{}%
\begin{tabular}{|c|c|c|}
\hline 
$n$ & Partially expanded $k_{n}(x)=c_{0}$ & Solutions $\forall\:\mathrm{branches}$$\:b\:\mathrm{containing}\:c_{0}$\rule[-1pt]{0pt}{14pt}\tabularnewline
\hline 
\hline 
0 & $\dfrac{1}{x}e^{-x}=c_{0}$ & $x=\mathrm{inverse}\,_{b}(k_{0})(c_{0})$\rule[-11pt]{0pt}{30pt}\tabularnewline
\hline 
1 & $\left(\dfrac{1}{x^{2}}+\dfrac{1}{x}\right)e^{-x}=c_{0}$ & $x=\mathrm{inverse}\,_{b}(k_{1})(c_{0})$\rule[-13pt]{0pt}{33pt}\tabularnewline
\hline 
2 & $\left(\dfrac{3}{x^{3}}+\dfrac{3}{x^{2}}+\dfrac{1}{x}\right)e^{-x}=c_{0}$ & $x=\mathrm{inverse}\,_{b}(k_{2})(c_{0})$\rule[-13pt]{0pt}{33pt}\tabularnewline
\hline 
3 & $\left(\dfrac{15}{x^{4}}+\dfrac{15}{x^{3}}+\dfrac{6}{x^{2}}+\dfrac{1}{x}\right)e^{-x}=c_{0}$ & $x=\mathrm{inverse}\,_{b}(k_{3})(c_{0})$\rule[-13pt]{0pt}{33pt}\tabularnewline
\hline 
4 & $\left(\dfrac{105}{x^{5}}+\dfrac{105}{x^{4}}+\dfrac{45}{x^{3}}+\dfrac{10}{x^{2}}+\dfrac{1}{x}\right)e^{-x}=c_{0}$ & $x=\mathrm{inverse}\,_{b}(k_{4})(c_{0})$\rule[-13pt]{0pt}{33pt}\tabularnewline
\hline 
$\vdots$ & $\vdots$ & $\vdots$\tabularnewline
\hline 
\end{tabular}
\end{table}

If a given equation contains some positive powers of the unknown $x$,
then divide by the largest such power of $x$. If a given strict Laurent
polynomial multiplies $e^{x}$ rather than $e^{-x},$ then the substitution
$x\mapsto-s$ transforms that product to a strict Laurent negative
exponential form.

Beware that the Digital Library of Mathematical Functions \cite{NIST DLMF}
defines $k_{n}(x)$ as $\pi/2$ times these definitions. I am interested
in opinions about if I should use that definition instead.
\begin{flushright}
``\textsl{The good thing about standards is that there are so many
to choose from}.''\\
\textendash{} Andrew S. Tanenbaum
\par\end{flushright}

Mathematica through version 12.1 has no builtin spherical Bessel function
$k_{n}$, but the AskConstants application has one implemented one
using these expansions, and AskConstants also has a function
\[
\mathrm{\mathsf{RealInverseSphericalBesselK}}\,[n,t,\mathit{branch}]
\]
implemented using the techniques described in Section \ref{sec:SolvableViayn}.

Figure \ref{Figure:Plotk0thruk3} plots $k_{0}(x)$ through $k_{3}(x)$
with each order a different color. From that Figure and Table \ref{Table:SphericalBesselToExponentialLarent},
notice how
\begin{itemize}
\item a pole of order $n+1$ dominates as $x$$\,\shortrightarrow0$; 
\item $k_{n}(x)\sim e^{-x}/x$ for large $\left|x\right|$, making $k_{n}(x)\rightarrow0$
as $x\shortrightarrow+\infty$ and $k_{n}(x)\rightarrow-\infty$ as
$x\shortrightarrow-\infty;$
\item $k_{n}(x)$ has only branches 0 and 1 for odd $n;$
\item $k_{n}(x)$ has only branches $-1$, 0 and 1 for even $n;$
\item For odd $n,$ $k_{n}(x)=c_{0}$ has two solutions when $c_{0}>0$
or one solution when $c_{0}\leq0;$
\item For even $n$, $k_{n}(x)=c_{0}$ has one solution when $c_{0}>0$
or when $c_{0}=k_{n}(x_{n,\,-1})$, versus two solutions when $c_{0}<k_{n}(x_{n,\,-1})$,
where $x_{n,\,-1}$ is the abscissa of the local maximum at $\mathrm{infsupum}\,_{-1}(k_{n})$.
\end{itemize}
\begin{figure}[H]

\caption{Modified spherical Bessel function of the second kind}

\label{Figure:Plotk0thruk3}
\noindent \centering{}\includegraphics{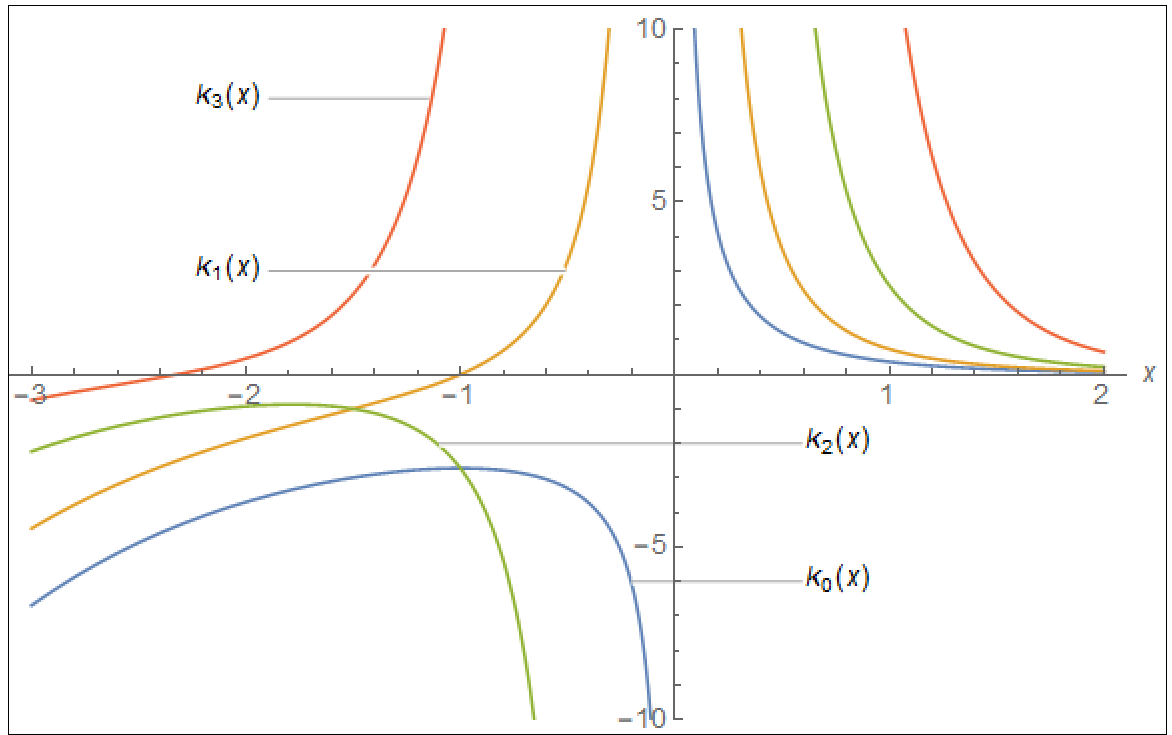}
\end{figure}

For a constant $d_{0}$ and an unknown $x$, solutions to the equation
\begin{equation}
xe^{x}=d_{0}\label{eq:LambdaWdefinition}
\end{equation}
are $x=W_{b}(d_{0})$ for all branches $b$ of Lambert $W$ that satisfy
equation (\ref{eq:LambdaWdefinition}). This function was standardized,
analyzed and named by Corless, Gonnet, Hare, Jeffrey, and Knuth \cite{CorlessEtAlLambdaW}.
Its history extends back hundreds of years and Bessel functions history
is even longer, but what has apparently not been recognized before
is that Lambert W can be expressed in terms of the inverse modified
spherical Bessel function $k_{0}(x)$ and vice versa: For nonzero
$d_{0}$ and real $x$, the first equation in Table \ref{Table:SphericalBesselToExponentialLarent}
can be converted to equation (\ref{eq:LambdaWdefinition}) by reciprocating
both sides then using the substitution $1/c_{0}\rightarrow d_{0}$.
Thus
\begin{align}
\mathrm{inverse}\,_{\boldsymbol{1}}(k_{0})(x)\:|\:x>0 & \equiv\dfrac{1}{W_{0}(x)},\label{eq:Inverse1k0}\\
\mathrm{inverse}\,_{\boldsymbol{0}}(k_{0})(x)\:|\:-\frac{1}{e}\leq x<0 & \equiv\dfrac{1}{W_{0}(x)},\label{eq:Inverse0k0}\\
\mathrm{inverse}\,_{\boldsymbol{-1}}(k_{0})(x)\:|\:-\frac{1}{e}<x<0 & \equiv\dfrac{1}{W_{-1}(x)}.\label{eq:InverseMinus1k0}
\end{align}
From Table \ref{Table:SphericalBesselToExponentialLarent} and Figure
\ref{Figure:Plotk0thruk3}, it is evident that $\mathrm{inverse}\,_{\boldsymbol{0}}(k_{0})(0)$
has infinite magnitude, as does $1/W_{0}(0)$. This together with
equations (\ref{eq:Inverse1k0}) through (\ref{eq:InverseMinus1k0})
imply that

\[
W_{\boldsymbol{0}}(x)\equiv\begin{cases}
\dfrac{1}{\mathrm{inverse}\,_{\boldsymbol{1}}(k_{0})(x)}, & x>0;\\
0 & x=0;\\
\dfrac{1}{\mathrm{inverse}\,_{\boldsymbol{0}}(k_{0})(x)}, & -\frac{1}{e}\leq x<0;
\end{cases}
\]
and
\[
W_{\boldsymbol{-1}}(x)\:|\:-\frac{1}{e}\leq x<0\equiv\dfrac{1}{\mathrm{inverse}\boldsymbol{\,_{-1}}(k_{0})(x)}.
\]

Thus for nonzero $x$, the mutual inverse $x\,e^{x}$ of $W_{0}(x)$
and $W_{-1}(x)$ is also the mutual inverse of the corresponding \textsl{three}
branches of the reciprocal of $1/k_{0}(x).$ The first-order pole
at $k_{0}(0)$ versus none for $xe^{x}$ split branch 0 of Lambert
$W$ into branches 0 and 1 of inverse spherical Bessel $k_{0}(x)$.
However, all nonzero real solutions expressible in terms of Lambert
$W_{0}(x)$ or $W_{-1}(x)$ are also expressible in terms of $\mathrm{inverse}\,_{\boldsymbol{b}}(k_{0})(x)$
with $b\in\{-1,0,1\}.$

Because of this relationship between Lambert $W$ and $\mathrm{inverse}\boldsymbol{\,_{b}}(k_{0})$,
we can regard $\mathrm{inverse}\boldsymbol{\,_{b}}(k_{n})$ for integer
$n>0$ as generalizations of Lambert $W$.

To summarize for spherical Bessel $k_{n}(x):$
\begin{prop}
For real $x$ and an equation of the form
\begin{equation}
p(x)e^{-x}=c_{0}\label{eq:knOfxEqc0}
\end{equation}
from row $n$ of the countably infinite number of rows partially listed
in Table that can be extended by recurrence $\mathrm{(}$\ref{eq:RecurrenceForkn}$\mathrm{)}$,
let $x_{n,m}$ denote the $m^{\mathrm{th}}$ infsupum abscissa of
$the,$ left side and let $B$ denote the set of all real branches
b of $\mathrm{inverse}\,_{b}(k_{n})$ for which the interval
\[
[k_{n}(x_{n,\,b-1}),k_{n}(x_{n,b}))
\]
contains $c_{0}.$

Then all of the real solutions to equation $(\ref{eq:knOfxEqc0})$,
if any, are
\[
x=\mathrm{inverse}\,_{b}(k_{n})(c_{0})
\]
where branches $b$ are all elements of set $B$.
\end{prop}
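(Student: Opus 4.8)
The plan is to follow exactly the template established for Proposition \ref{thm:yn}, since the only structural change is that the Rayleigh-type formula now yields an exponential rather than a trigonometric strict Laurent expansion. First I would establish that the left side of equation (\ref{eq:knOfxEqc0}) is literally $k_n(x)$. This is the one genuine computation in the argument: one shows that the Rayleigh-type formula $k_n(x)=(-x)^n\!\left(\frac{1}{x}\frac{d}{dx}\right)^n\frac{e^{-x}}{x}$ produces, for each nonnegative integer $n$, a strict Laurent polynomial $p(x)$ in $1/x$ with integer coefficients multiplying $e^{-x}$, matching row $n$ of Table \ref{Table:SphericalBesselToExponentialLarent}. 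Either an induction on $n$ using the recurrence (\ref{eq:RecurrenceForkn}) for the coefficient polynomials $q_n$, or a direct application of the product rule to the repeated operator, suffices to confirm the integer Bessel-polynomial coefficients. Hence any equation transformable to the form $p(x)e^{-x}=c_0$ from that row is identically the equation $k_n(x)=c_0$.

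Second, I would invoke the definition of the real inverse. By construction $\mathrm{inverse}\,_b(k_n)(c_0)$ is the unique real $x$ with $k_n(x)=c_0$ lying in the maximally monotonic interval (branch) number $b$, and such a solution exists precisely when $c_0$ lies in the half-open range $[k_n(x_{n,b-1}),k_n(x_{n,b}))$ of that branch. The set $B$ is defined to collect exactly those branches. Third, I would argue completeness and soundness together: because the branches partition the real line (minus the pole at $0$) into maximal monotonic pieces, every real root of $k_n(x)=c_0$ lies in exactly one branch, and on each branch there is at most one root, occurring if and only if $c_0$ lies in that branch's range. Therefore the real roots are in bijection with the elements of $B$, and $x=\mathrm{inverse}\,_b(k_n)(c_0)$ for $b\in B$ enumerates all real solutions and only real solutions.

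The main obstacle, and the only part requiring genuine care, is the first step combined with confirming the branch structure tabulated just before the proposition, namely that the real branches are $\{0,1\}$ for odd $n$ and $\{-1,0,1\}$ for even $n$. Unlike $y_n$ and $j_n$, the function $k_n$ is non-oscillatory and has only finitely many monotonic pieces, so the enumeration of $B$ is finite; establishing that there are no additional critical points, i.e.\ that $k_n'$ has exactly the claimed number of real zeros, requires a short sign analysis of the derivative, most cleanly carried out through the differentiation recurrence for $k_n$ together with the dominance of the order-$(n+1)$ pole as $x\shortrightarrow0$ and the $e^{-x}/x$ asymptotics for large $|x|$. Once that branch count and the associated ranges are pinned down, the half-open-interval bookkeeping in the definition of $B$ is routine, and the proof closes in the same three bullet points as Proposition \ref{thm:yn}.
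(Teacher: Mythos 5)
Your proposal is correct and follows essentially the same route as the paper: the paper's proof is simply ``Similar to that of Proposition \ref{thm:yn},'' whose three steps (the Rayleigh-type expansion identifies the left side with $k_n(x)$, the definition of $\mathrm{inverse}\,_b(k_n)$ as the unique root in monotonic branch $b$, and the conclusion that $B$ captures all real solutions) are exactly what you spell out, with somewhat more detail. Your additional verification of the finite branch structure ($\{0,1\}$ for odd $n$, $\{-1,0,1\}$ for even $n$) is a nice supplement but not logically required, since the proposition only needs that every real root lies in some maximal monotonic branch whose range contains $c_0$.
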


\begin{proof}
$\,$Similar to that of Proposition \ref{thm:yn}.
\end{proof}
Knowing the correspondence between Lambert $W_{0}(x)$, $W_{-1}(x)$
and $\mathrm{inverse}\,_{b}(k_{\boldsymbol{0}})(x)$ might enable
us to exploit identities for $k_{0}$ to transform more equations
to a form solvable by an explicit closed form expression. It also
might similarly facilitate integration, summation of infinite series,
and other operations with these functions.

However, a potentially greater benefit of these closed-form representations
of $k_{n}(x)$ is that for $n>0$, if we can transform any equation
to the form
\[
\mathrm{any}\:\mathrm{table}\:\mathrm{entry\:for\:}k_{n}(x)=\mathrm{any}\:\mathrm{constant}\:\mathrm{in}\:\mathrm{the}\:\mathrm{range}\:\mathrm{of}\:\mathrm{that}\:\mathrm{entry},
\]
then we can express a solution in terms of branch $b$ of $\mathrm{inverse}\,_{b}(k_{n})(x)\,$,
where $n$ is the corresponding entry in the column labeled $n$ and
$b$ is a branch that contains the float approximation of the constant.

References \cite{MaignanAndScott,MezoAndBasricz} describe how nested
instances of Lambert $W$ can be used to express explicit exact closed-form
real solutions to equations transformable to
\[
e^{-cx}=a_{0}(x-r_{1})^{m_{1}}(x-r_{1})^{m_{2}}\cdots(x-r_{n})^{m_{n}}
\]
where $c$, $a_{0}$ and $r_{1}$ through $r_{n}$ are exact real
constants, with $m_{1}$ through $m_{n}$ integer. They also describe
some physics applications. This complements the inverse spherical
Bessel $k_{n}(x)$ solutions discussed in this section, which are
not limited to equations exactly factorable into linear factors having
all real zeros, but are limited to specific integer coefficients after
factoring out an appropriate unit times the gcd of the coefficients.

As an example, Figure \ref{Figure:k2eq3On3Ln2MinusPi} plots both
sides of the equation
\[
\left(\dfrac{1}{x}+\dfrac{3}{x^{2}}+\dfrac{3}{x^{3}}\right)e^{-x}=\dfrac{3}{3\log2-\pi}\approxeq-2.82446\,.
\]

\begin{figure}[H]
\begin{centering}
\caption{Both sides of the equation $\left(\dfrac{1}{x}+\dfrac{3}{x^{2}}+\dfrac{3}{x^{3}}\right)e^{-x}=\dfrac{3}{3\log2-\pi}$}
\label{Figure:k2eq3On3Ln2MinusPi}
\par\end{centering}
\centering{}\includegraphics{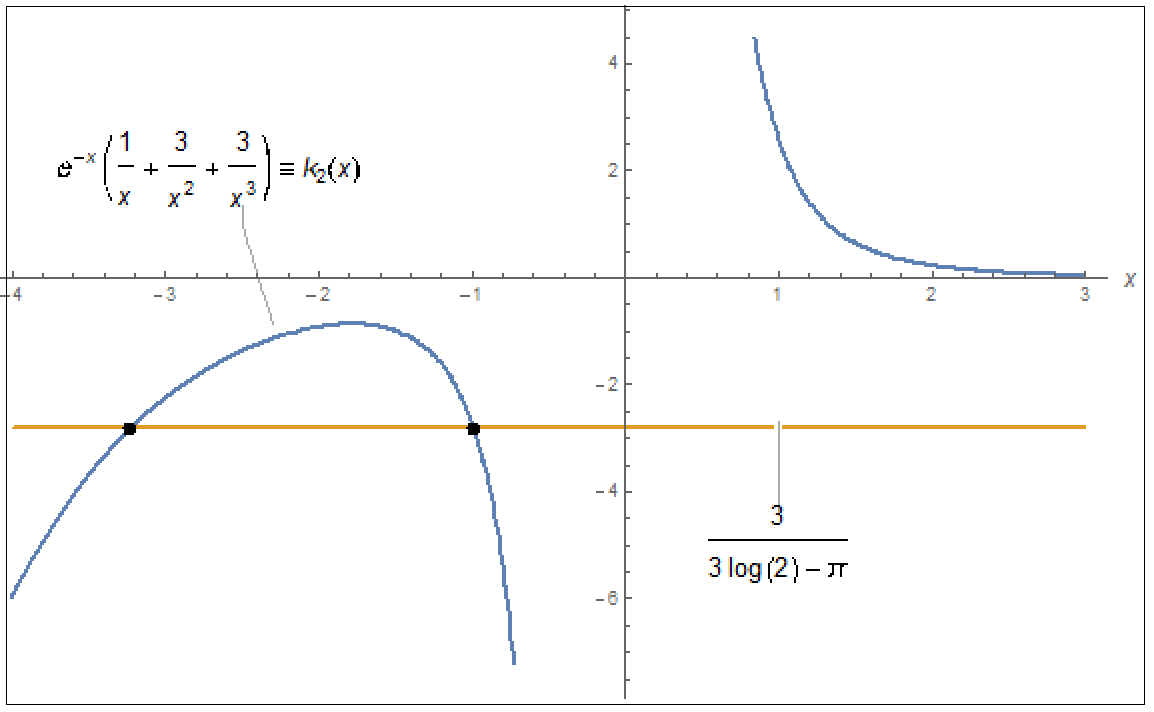}
\end{figure}
This equation has two exact closed-form real solutions: 
\[
\dfrac{1}{x=\mathrm{inverse}\,_{b}\left(k_{2}\right)\left(\dfrac{3}{3\log2-\pi}\right)}
\]
with branch $b=-1$ for $x\leq-1.78324$ where the monotonicity changes
and branch $b=0$ for $-1.78324<x\:<\:0$$\,.$ (There are no solutions
for branch $b=1$ where $x>0\,.$)

I am unaware of a published example solvable by $\mathrm{inverse}\,_{b}\left(k_{n}\right)$
for $n>0$, and I would greatly appreciate learning of some.

\section{Conclusions\label{sec:Conclusions}}
\begin{enumerate}
\item It is pleasantly surprising that four classes of equations long-thought
to have no exact closed form solutions actually do:
\begin{enumerate}
\item I found about 180 published examples solvable by $\mathrm{inverse}\,_{b}(y_{n})$.
\item I found about 150 published examples solvable by $\mathrm{inverse}\,_{b}(j_{n})$.
\item Although I have not yet found published examples of strict Laurent
$\sinh$ $\cosh$ equation solvable by $\mathrm{inverse}\,_{b}(i_{n})$,
I expect that there are some.
\item Although the many published examples solvable by $\mathrm{inverse}\,_{b}(k_{0})$
are more conveniently solvable by $W_{0}$ or $W_{-1}$ and I have
not yet found published examples solvable by $\mathrm{inverse}\,_{b}(k_{n})$
for $n>0,$ I expect that there are some.
\end{enumerate}
\item The general technique that enabled the exact solutions discussed in
this article was to recognize when the left side of an equation in
the form
\[
\mathit{expression}=\mathit{constant}
\]
is an exact transformation of some named $f(x)$ for which we can
compute and concisely represent inverses. This recognition is often
difficult for humans and more difficult to implement in computer algebra
systems than transforming $f(x)$ to a cryptic equivalent. For example,
Mathematica 12.1 has $\mathsf{EllipticNomeQ}$ and $\mathsf{InverseEllipticNomeQ}$
functions, and
\[
\mathrm{\mathsf{FunctionExpand}}\,\left[\mathrm{\mathsf{EllipticNomeQ}}\,[x]\right]\mapsto e^{-\dfrac{\pi\,\mathrm{\mathsf{EllipticK}}\,[1-x]}{\mathrm{\mathsf{EllipticK}}\,[x]}},
\]
but neither the $\mathsf{Solve}$ nor the more comprehensive $\mathsf{Reduce}$
function can determine that a solution to
\[
e^{-\dfrac{\pi\,\mathrm{\mathsf{EllipticK}}\,[1-x]}{\mathrm{\mathsf{EllipticK}}\,[x]}}=\dfrac{1}{5}
\]
is
\begin{equation}
\mathrm{\mathsf{InverseEllipticNomeQ}}\,\left[\dfrac{1}{5}\right].\label{eq:InverseEllipticNomeOf1fifth}
\end{equation}
However, executing
\[
\mathrm{\mathsf{FindRoot}}\,\left[e^{-\dfrac{\pi\,\mathrm{\mathsf{EllipticK}}\,[1-x]}{\mathrm{\mathsf{EllipticK}}\,[x]}}=\dfrac{1}{5},\left\{ x,0.5\right\} ,\mathrm{\mathsf{WorkingPrecision}}\rightarrow16\right]
\]
returns $\{x\rightarrow0.9658521935950790\}$; and entering that constant
into AskConstants returns the candidate expression (\ref{eq:InverseEllipticNomeOf1fifth})
with an excellent assessment of being the limit as the precision approaches
infinity. \textsl{Such constant-recognition software is best at recognizing
low-complexity constants such as expression} (\ref{eq:InverseEllipticNomeOf1fifth}),
\textsl{and low complexity exact closed forms are the most useful}.
Therefore routine use of such tools can lead to surprising sought
or unsought discoveries \textendash{} so much so that I would like
to find a programmer who could arrange for several such tools to run
in the background, automatically checking all of the floats in my
top-level results from all of my mathematical software as a low priority
background task.\footnote{The reason for using several is that most constant recognition programs
can propose candidates that none of the other can propose.} For example, I could imagine having an optionally open window that
logged floats and their session location paired with highly likely
exact limits thereof \textendash{} even if entries appeared noticeably
after I had moved on to further immediate calculations.
\item The utility of computer algebra systems would benefit greatly from
more multi-branched inverse special functions, even if it initially
necessitates limiting the domain to the reals and limiting orders
to appropriate integers.
\item Judging from the number of different published equations currently
known to me that were widely thought to have no explicit exact closed-form
solution but do using the techniques described in this article:
\begin{enumerate}
\item The inverses of $y_{n}(x)$ for strict Laurent cosines and sines has
the most impact, followed by those of $j_{n}(x)$ for strict Laurent
sines and cosines
\item I suspect that there are published examples solvable by inverses of
$k_{n}$ for $n>0$ and by $i_{n}$, but the fact that I have not
yet found any suggests that these solutions are significantly less
frequent.
\end{enumerate}
\end{enumerate}
The current AskConstants version 5.0 \cite{StoutemyerAskConsants}
contains implementation of the multi-branched inverse spherical Bessel
functions $y_{n}$ and $j_{n}.$ The files are ASCII text files that
can be viewed with any ASCII text editor. I plan for the next version
of AskConstants to contain implementations of the spherical Bessel
functions $i_{n}$ and $k_{n}$ together with their inverses. Perhaps
AskConstants will lead me to published examples for those functions
too. 

\subsection*{Current Limitations}

This is a ``truth in advertising'' subsection intended to reduce
the chance of misunderstanding:
\begin{enumerate}
\item Although spherical Bessel $k_{n}(x)$ and the concept of multi-branched
inverse functions precede the name Lambert $W$, I do not propose
replacing Lambert $W_{0}$ and $W_{-1}$ with $\mathrm{1/inverse}\,_{b}(k_{0})$,
for which the reciprocal introduces an inconvenient pole, an extra
branch, and a less aesthetic piecewise definition into the equivalent
of $W_{0}(x).$ The benefit of inverse spherical $k_{0}(x)$ is merely
that it provides an alternative viewpoint that might enable new identities
to be applied. The benefit of other order $\mathrm{inverse}\,_{b}(k_{n})(x)$
is that they can solve equations that appear to have no other known
explicit exact closed form solution.
\item It is important to realize that although all of the inverse spherical
Bessel functions discussed in this article can solve solve equations
containing certain linear combinations of cofactors having a certain
form, the techniques described here cannot solve equations having
\textsl{arbitrary} linear combinations of such cofactors. The techniques
described in this article exploit only one degree of freedom by transposing
all terms depending on the unknown and $\cos x$, $\sin x$, $\cosh x$,
$\sinh x$, or $e^{-x}$ to the left side and all other terms to the
right side, then dividing both sides by an appropriate unit times
the $\gcd$ of the coefficients on the left side, making the left
side coefficients be integers; then matching all of the terms on the
left but permitting one arbitrary constant $c_{0}$ on the right side.
\item Many applicable published equations contain positive rather than negative
integer powers of the variable requiring multiplication by a power
of $x;$ and many equations contain functions that must be converted
to $\cos x$, $\sin x$, $\cosh x$, $\sinh x$ and $e^{-x}$ to achieve
a form partially listed in Tables \ref{Table:SphericalBesselyn(z)ToTrigLarent-1},
\ref{Table:SphericalBessejnlToTrigLarent}, \ref{Table:SphericalBesselinToSinhCoshLaurent},
and \ref{Table:SphericalBesselToExponentialLarent}. This requires
some manual or semi-manual effort until such steps are implemented
in your computer algebra system Solve function. Moreover, during those
manual operations, it is important to beware of and account for introducing
or annihilating solutions $x=0$.
\end{enumerate}
That said, I hope that you and others will devise ways to use the
ideas here more flexibly.

\section*{Acknowledgments}

Thank you Bill Gosper for nudging me for an explanation of the proposed
closed form for the Dottie number. Thank you Rob Corless and Christophe
Vignat for your encouragement and suggestions. I also thank all of
the early pioneers of constant recognition tables and software, many
of whom are named in \cite{StoutemyerHowToHuntWildConstants}. They
foresaw the value of and developed new ways to greatly increase the
synergy between approximate and exact computation.

\end{document}